\numberwithin{equation}{section}
\newtheorem{thm}{Theorem}[section]
\newtheorem{cor}[thm]{Corollary}
\newtheorem{lem}[thm]{Lemma}
\newtheorem{prop}[thm]{Proposition}
\newcommand{\Ann}{\mbox{Ann}\,}
\newcommand{\coker}{\mbox{Coker}\,}
\newcommand{\Hom}{\mbox{Hom}\,}
\newcommand{\Ext}{\mbox{Ext}\,}
\newcommand{\Spec}{\mbox{Spec}\,}
\newcommand{\Max}{\mbox{Max}\,}
\newcommand{\Ker}{\mbox{Ker}\,}
\newcommand{\Ass}{\mbox{Ass}\,}
\newcommand{\Assh}{\mbox{Assh}\,}
\newcommand{\Supp}{\mbox{Supp}\,}
\renewcommand{\dim}{\mbox{dim}\,}
\renewcommand{\Im}{\mbox{Im}\,}
\newcommand{\Min}{\mbox{Min}\,}
\newcommand{\h}{\mbox{ht}\,}
\renewcommand{\H}{\mbox{H}}
\newcommand{\V}{\mbox{V}}
\newcommand{\fa}{\mathfrak{a}}
\newcommand{\fb}{\mathfrak{b}}
\newcommand{\fm}{\mathfrak{m}}
\newcommand{\fp}{\mathfrak{p}}
\newcommand{\fq}{\mathfrak{q}}
\begin{document}

\bibliographystyle{amsplain}

\author{Mohammad T. Dibaei}
\address{ Mohammad T. Dibaei\\Faculty of Mathematical Sciences, Tarbiat Moallem
University, Tehran, Iran, and Institute for Theoretical Physics and
Mathematics (IPM), Tehran, Iran.}

\email{dibaeimt@ipm.ir}

\author{Raheleh Jafari}
\address{Raheleh Jafari\\Faculty of Mathematical Sciences, Tarbiat Moallem
University, Tehran, Iran.}

\email{jafarirahele@yahoo.com}

\keywords{Cousin complexes, local cohomology\\
The research of the first author was in part supported from IPM (No.
86130117).}

\subjclass[2000]{13D25; 13D45; 13C14}

\title[Finite Cousin
complexes] {Modules with finite Cousin cohomologies have\\ uniform
local cohomological annihilators}


\begin{abstract}
 Let $A$ be a Noetherian ring. It is shown that any finite
 $A$--module $M$ of finite Krull dimension with finite Cousin complex
 cohomologies has a uniform local cohomological
 annihilator. The converse is also true for a finite module
 $M$ satisfying $(S_2)$ which is over a local ring with Cohen--Macaulay formal fibres.

\end{abstract}

\maketitle

\section{Introduction}
Throughout let $A$ denote a commutative Noetherian ring and $M$ a
finite (i.e. finitely generated) $A$-module. Recall that an
$A$--module $M$ is called {\it equidimensional} (or unmixed) if
$\Min_A(M)= \Assh_A(M)$ (i.e. for each minimal prime $\fp$ of
$\Supp_A(M)$, $\dim_A(M)= \dim(A/\fp))$. For an ideal $\fa$ of $A$,
write $\H_\fa^i(M)$ for the $i$th local cohomology module of $M$
with support in $V(\fa)= \{\fp\in\Spec(A): \fp\supseteq \fa\}$. An
element $x\in A$ is called a {\it uniform local cohomological
annihilator} of $M$ if $x\in A\setminus \cup_{\fp\in\Min_A(M)}\fp$
and for each maximal ideal $\fm$ of $A$, $x\H_\fm^i(M)= 0$ for all
$i< \dim_{A_\fm}(M_\fm)$. The existence of a local cohomological
annihilator is studied by Hochster and Huneke \cite{HH2} and proved
its importance for the existence of big Cohen--Macaulay algebras and
a uniform Artin--Rees theorem \cite{Hu}.

In \cite{Z}, Zhou studied rings with a uniform local cohomological
annihilator. Hochster and Huneke, in \cite{HH1}, proved that if $A$
is locally equidimensional (i.e. $A_\fm$ is equidimensional for
every maximal ideal $\fm$ of $A$) and is a homomorphic image of a
Gorenstein ring of finite dimension, then $A$ has a strong uniform
local cohomological annihilator ( i.e. $A$ has an element which is a
uniform local cohomological annihilator of $A_\fp$ for each
$\fp\in\Spec(A)$). In \cite{Z}, Zhou showed that if a locally
equidimensional ring $A$ of positive dimension is a homomorphic
image of a Cohen--Macaulay ring of finite dimension (or an excellent
local ring), then $A$ has a uniform local
cohomological annihilator.\\

Cousin complexes were introduced by Hartshorne in \cite{H} and have
a commutative algebra analogue given by Sharp in \cite{S1}.
Recently, Cousin complexes have been studied by several authors. In
\cite{D}, \cite{DT}, and \cite{K}, Dibaei, Tousi, and Kawasaki
studied finite Cousin complexes (i.e. the Cousin complexes with
finitely generated cohomologies). In \cite[Proposition 9.3.5]{LNS},
Lipman, Nayak, and Sastry generalized
these results to complexes on formal schemes.\\

In section 2, it is proved that any finite $A$--module of finite
Krull dimension with finite Cousin complex cohomologies has a
uniform local cohomological annihilator (Theorem 2.7). As a result
it follows that if $(A, \fm)$ is local, satisfies Serre's condition
$(S_2)$, and such that all of its fibres of
$A\longrightarrow\widehat{A}$ are Cohen--Macaulay, then $A$ has a
uniform local cohomological annihilator (Corollary 2.10). For a
finite module $M$ over a local ring $(A, \fm)$ satisfying $(S_2)$
and with Cohen Macaulay formal fibres, it is proved that the
following conditions are equivalent: (i) $\widehat{M}$, the
completion of $M$ with respect to $\fm$--adic topology, is
equidimensional; (ii) $\mathcal{C}_A(M)$, the Cousin complex of $M$
is finite; (iii) $M$ has a uniform local cohomological annihilator
(Theorem 2.13).

In section 3, for certain modules $M$, the relationship between the
cohomology modules of the Cousin complex of $M$ and the local
cohomology modules of $M$ with respect to an arbitrary ideal of $A$
is studied. It is shown that the $M$--height of $\fa$ is equal to
the infimum of numbers $r$ for which $0:_A \H_\fa^r(M)$ does not
contain the product of all the annihilators of the Cousin
cohomologies of $M$ (Theorem 3.2).
\section{Cousin complexes}
Let $M$ be an $A$--module and let $\mathcal{H}= \{ H_i: i\geq 0\}$
be the family of subsets of $\Supp_A(M)$ with $H_i=
\{\fp\in\Supp_A(M): \dim_{A_\fp}(M_{\fp})\geq i\}$. The family
$\mathcal{H}$ is called the $M$--height filtration of
$\Supp_A(M)$. Define the Cousin complex of $M$ as the complex
\begin{equation} \mathcal{C}_A(M):
0\overset{d^{-2}}{\longrightarrow}M^{-1}\overset{d^{-1}}{\longrightarrow}
M^0\overset{d^{0}}{\longrightarrow}M^1\overset{d^{1}}{\longrightarrow}
\cdots\overset{d^{i-1}}{\longrightarrow}M^i\overset{d^{i}}{\longrightarrow}
M^{i+1}\longrightarrow\cdots, \tag{*}\end{equation} where $M^{-1}=
M$, $M^i= \underset{\fp\in H_i\setminus H_{i+1}}{\oplus}(\coker
d^{i-2})_\fp$ for $i>-1$. The homomorphism $d^i: M^i\longrightarrow
M^{i+1}$ has the following property: for $m\in M^i$ and $\fp\in
H_i\setminus H_{i+1}$, the component of $d^i(m)$ in $(\coker
d^{i-1})_\fp$ is $\overline{m}/1$, where  $\bar{} :
M^i\longrightarrow \coker d^{i-1}$ is the natural map (see \cite{S1} for details).\\

Throughout, for the Cousin complex (*), we use the following
notations:
$$K^i:= \Ker d^i, D^i:= \Im d^{i-1}, H^i:= K^i/D^i, i=-1, 0,
\cdots.$$ We call the Cousin complex $\mathcal{C}_A(M)$ {\it finite}
if, for each $i$, the cohomology module $H^i$ is finite. Recall that
for an ideal $\fa$ of $A$ and an $A$--module $M$ , the $M$--height
of $\fa$ is defined by $\h_M (\fa):= \inf\{\dim M_\fp :
\fp\in\Supp_A(M)\cap \V(\fa)\}$. Note that $\h_M (\fa)\geq 0$
whenever $M\not= \fa M$. If $M$ is finitely generated then
$\h_M(\fa)= \h(\frac{\fa+I}{I})$, where $I= \Ann_A(M)$.

 We begin by the following lemma which for the first part we adopt
the argument in \cite[Theorem]{S2}.
\begin{lem}
Let $M$ be an $A$-module. For any integer $k$ with $0\leq k < \h_M
(\fa)$, the following statements are true.
\begin{itemize}
\item[(a)] $\H_\fa^s(M^k)= 0$ for all integers $s\geq 0$.
\item[(b)] $\Ext_A^s(A/\fa, M^k)= 0$ for all integers $s\geq 0$.
\end{itemize}
\end{lem}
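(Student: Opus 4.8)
The plan is to use the explicit form of the terms of the Cousin complex together with the meaning of $\h_M(\fa)$ as an infimum. For $k\geq 0$ we have $M^k=\bigoplus_{\fp\in H_k\setminus H_{k+1}}N_\fp$ with $N_\fp:=(\coker d^{k-2})_\fp$, and the whole argument rests on the following observation: if $0\leq k<\h_M(\fa)$, then each such $\fp$ fails to contain $\fa$, and since $N_\fp$ is a localization it is an $A_\fp$--module, so some element of $\fa$ acts invertibly on $N_\fp$. Parts (a) and (b) then both fall out of a multiplication-by-an-element argument, applied to each summand and reassembled.

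In detail, I would proceed as follows. First, note that $\fp\in H_k\setminus H_{k+1}$ means $\fp\in\Supp_A(M)$ and $\dim_{A_\fp}(M_\fp)=k$; since $k<\h_M(\fa)=\inf\{\dim_{A_\fq}(M_\fq):\fq\in\Supp_A(M)\cap\V(\fa)\}$, the possibility $\fp\in\V(\fa)$ would give $\dim_{A_\fp}(M_\fp)\geq\h_M(\fa)>k$, a contradiction; so $\fa\not\subseteq\fp$ and we may choose $a_\fp\in\fa\setminus\fp$, which then acts as an automorphism on the $A_\fp$--module $N_\fp$. For (a): multiplication by $a_\fp$ on $\H_\fa^s(N_\fp)$ is an automorphism, being induced by multiplication by $a_\fp$ on $N_\fp$, while at the same time it is locally nilpotent because $\H_\fa^s(N_\fp)$ is $\fa$--torsion and $a_\fp\in\fa$; a locally nilpotent automorphism forces its module to vanish, so $\H_\fa^s(N_\fp)=0$ for all $s\geq 0$, and since $\H_\fa^s(-)$ commutes with arbitrary direct sums over the Noetherian ring $A$ we get $\H_\fa^s(M^k)=0$. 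For (b): multiplication by $a_\fp$ on $\Ext_A^s(A/\fa,N_\fp)$ equals zero when read off the first argument (because $a_\fp\in\fa$ annihilates $A/\fa$) and is an automorphism when read off the second argument (being multiplication by $a_\fp$ on $N_\fp$), hence $\Ext_A^s(A/\fa,N_\fp)=0$; since $A$ is Noetherian, $A/\fa$ has a resolution by finitely generated free modules, so $\Ext_A^s(A/\fa,-)$ commutes with direct sums and $\Ext_A^s(A/\fa,M^k)=0$.

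I do not expect a serious obstacle. The one step carrying real content is the passage $k<\h_M(\fa)\;\Rightarrow\;\fa\not\subseteq\fp$ for every $\fp$ indexing a summand of $M^k$ --- i.e. recognizing that the hypothesis forces each $N_\fp$ to be a module over a localization of $A$ in which $\fa$ is the unit ideal. Everything else --- the description of $H_k\setminus H_{k+1}$, the fact that $\H_\fa^s(N)$ is $\fa$--torsion, the fact that multiplication by a ring element on $\H_\fa^s(N)$ or on $\Ext_A^s(X,N)$ may be computed from either variable, and the compatibility of these functors with (possibly infinite) direct sums --- is standard and can simply be cited. One may also run (a) via flat base change along $A\to A_\fp$, identifying $\H_\fa^s(N_\fp)$ with $\H_{\fa A_\fp}^s(N_\fp)=0$ since $\fa A_\fp=A_\fp$; this is essentially the argument of \cite[Theorem]{S2} alluded to in the statement.
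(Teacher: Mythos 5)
Your proof is correct. For part (a) it is essentially the paper's argument: the same decomposition $M^k=\bigoplus_{\fp}(\coker d^{k-2})_\fp$, the same key observation that $k<\h_M(\fa)$ forces $\fa\not\subseteq\fp$ for every $\fp$ indexing a summand (so some $a_\fp\in\fa\setminus\fp$ acts invertibly on the summand), and then vanishing of each $\H_\fa^s(N_\fp)$ because an $\fa$--torsion module admitting an automorphism given by an element of $\fa$ must be zero, followed by additivity of local cohomology. Where you genuinely diverge is part (b): the paper does not argue summand by summand at all, but instead proves a general statement --- if $\H_\fa^s(N)=0$ for all $s\geq 0$ then $\Ext_A^s(A/\fa,N)=0$ for all $s\geq 0$ --- by induction on $s$, using an injective hull $N\hookrightarrow E$ and dimension shifting along $0\to N\to E\to N'\to 0$, and then applies this with $N=M^k$; so in the paper (b) is a formal consequence of (a). You instead prove (b) directly on each summand by the two-variable trick: multiplication by $a_\fp$ on $\Ext_A^s(A/\fa,N_\fp)$ is zero when computed from the first argument and an automorphism when computed from the second, hence the module vanishes, and you reassemble using the fact that $\Ext_A^s(A/\fa,-)$ commutes with direct sums because $A/\fa$ has a resolution by finitely generated free modules over the Noetherian ring $A$. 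Both routes are sound; the paper's buys a reusable general implication (local cohomology vanishing $\Rightarrow$ Ext vanishing) at the cost of the injective-hull induction, while yours is more elementary and self-contained, making (b) independent of (a) and resting only on standard bifunctoriality and finite presentation of $A/\fa$.
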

\begin{proof}
(a). Set $C_{k-1}:= \coker d^{k-2}= M^{k-1}/D^{k-1}$ so that $M^k=
\underset{\underset{\h_M (\fp)= k}{\fp\in\Supp_A(M)}}{\oplus}
(C_{k-1})_\fp$. For each $k< \h_M (\fa)$ and each $\fp\in\Supp_A(M)$
with $\h_M (\fp)= k$, there exists an element $x\in
\fa\setminus\fp$. Thus the multiplication map
$(C_{k-1})_\fp\overset{x}{\longrightarrow} (C_{k-1})_\fp$ is an
automorphism and so the multiplication map
$\H_\fa^s((C_{k-1})_\fp)\overset{x}{\longrightarrow}
\H_\fa^s((C_{k-1})_\fp)$ is also an automorphism for all integers
$s$. One may then conclude that $\H_\fa^s((C_{k-1})_\fp)= 0$. Now,
from additivity of local cohomology functors, it follows that
$\H_\fa^s(M^k)= 0$.\\

(b). Assume in general that $N$ is an $A$--module such that
$\H_\fa^s(N)= 0$ for all $s\geq 0$. We show, by induction on $i,
i\geq 0$, that $\Ext_A^i(A/\fa, N)= 0$. For $i= 0$, one has
$\Hom_A(A/\fa, N)= \Hom_A(A/\fa, \H_\fa^0(N))$ which is zero. Assume
that $i> 0$ and the claim is true for any such module $N$ and all
$j\leq i-1$. Choose $E$ to be an injective hull of $N$ and consider
the exact sequence $0\longrightarrow N\longrightarrow
E\longrightarrow N'\longrightarrow 0$, where $N'= E/N$. As
$\H_\fa^0(E)= 0$, it follows that  $\H_\fa^s(N')= 0$ for all $s\geq
0$. Thus $\Ext_A^{i-1}(A/\fa, N')= 0$, by our induction hypothesis.
As, by the above exact sequence $\Ext_A^{i-1}(A/\fa,
N')\cong\Ext_A^{i}(A/\fa, N)$, the result follows.
\end{proof}

 The following technical result is important
for the rest of the paper.
\begin{prop}
Let $M$ be an $A$--module and let $\fa$ be an ideal of $A$ such that
$\fa M\not = M$. Then, for each non--negative integer $r$ with $r<
\h_M (\fa)$,
$$\prod_{i= 0}^r(0 :_A \Ext_A^{r-i}(A/\fa, H^{i-1}))\subseteq 0 :_A \Ext_A^r(A/\fa, M).$$
Here $\prod$ is used for product of ideals.
\end{prop}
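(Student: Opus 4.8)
The plan is to dismantle the Cousin complex (*) into its short exact sequences and push them through the long exact sequences of $\Ext_A(A/\fa,-)$, using Lemma 2.1(b) to annihilate the terms $\Ext_A^s(A/\fa,M^k)$ whenever $0\le k<\h_M(\fa)$. From (*) one has, for each $i\ge -1$, the exact sequences
$$0\lo D^i\lo K^i\lo H^i\lo 0 \qquad\text{and}\qquad 0\lo K^i\lo M^i\lo D^{i+1}\lo 0,$$
and, since $d^{-2}=0$, $D^{-1}=0$; hence $H^{-1}=K^{-1}$ and the second sequence at $i=-1$ reads $0\lo H^{-1}\lo M\lo D^0\lo 0$. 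I will use repeatedly the elementary fact that whenever $X'\lo X\lo X''$ is exact at $X$ one has $(0 :_A X')(0 :_A X'')\su 0 :_A X$: an element of $0 :_A X''$ carries $X$ into the image of $X'$, which is a quotient of $X'$ and hence killed by $0 :_A X'$.

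First I would apply this to the portion $\Ext_A^{r}(A/\fa,H^{-1})\lo\Ext_A^{r}(A/\fa,M)\lo\Ext_A^{r}(A/\fa,D^0)$ of the long exact sequence of $0\lo H^{-1}\lo M\lo D^0\lo 0$, obtaining
$$\big(0 :_A \Ext_A^{r}(A/\fa,H^{-1})\big)\big(0 :_A \Ext_A^{r}(A/\fa,D^0)\big)\su 0 :_A \Ext_A^{r}(A/\fa,M).$$
Since $\Ext_A^{r}(A/\fa,H^{-1})$ is exactly the $i=0$ factor of the asserted product, it then remains to prove $\prod_{i=1}^{r}\big(0 :_A \Ext_A^{r-i}(A/\fa,H^{i-1})\big)\su 0 :_A \Ext_A^{r}(A/\fa,D^0)$.

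I would establish this by downward induction on $j$, for $0\le j\le r$, in the form $\prod_{i=j+1}^{r}\big(0 :_A \Ext_A^{r-i}(A/\fa,H^{i-1})\big)\su 0 :_A \Ext_A^{r-j}(A/\fa,D^j)$. The base case $j=r$ is immediate: the left-hand side is the empty product $A$, while $D^{r}\hookrightarrow M^{r}$ and $r<\h_M(\fa)$ force $\Hom_A(A/\fa,D^{r})=0$ by Lemma 2.1(b). For the passage from $j+1$ to $j$ (with $0\le j\le r-1$, so that $j<\h_M(\fa)$): Lemma 2.1(b) kills $\Ext_A^{r-j-1}(A/\fa,M^{j})$ and $\Ext_A^{r-j}(A/\fa,M^{j})$, so the connecting homomorphism of $0\lo K^{j}\lo M^{j}\lo D^{j+1}\lo 0$ is an isomorphism $\Ext_A^{r-j-1}(A/\fa,D^{j+1})\cong\Ext_A^{r-j}(A/\fa,K^{j})$; substituting this isomorphism and the induction hypothesis into the inclusion produced by applying the elementary fact to $\Ext_A^{r-j-1}(A/\fa,H^{j})\lo\Ext_A^{r-j}(A/\fa,D^{j})\lo\Ext_A^{r-j}(A/\fa,K^{j})$ (from $0\lo D^{j}\lo K^{j}\lo H^{j}\lo 0$) yields precisely the inclusion for $j$, the extra factor $0 :_A \Ext_A^{r-j-1}(A/\fa,H^{j})$ being the $i=j+1$ term. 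Specializing to $j=0$ and combining with the previous paragraph gives the Proposition.

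I do not expect a genuine obstacle; the substance is entirely the bookkeeping. One must keep the cohomological degree $r-i$ synchronized with the module indices appearing in $H^{i-1}$, $D^{j}$, $K^{j}$, and — the only real constraint — verify that every index that occurs stays strictly below $\h_M(\fa)$, so that Lemma 2.1(b) genuinely applies and the connecting maps are honest isomorphisms rather than merely injections or surjections. The delicate point is the base case $j=r$, where I use $r<\h_M(\fa)$ (and not merely $r\le\h_M(\fa)$) to make $\Hom_A(A/\fa,D^{r})$ vanish.
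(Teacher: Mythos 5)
Your proof is correct and is essentially the paper's argument run in mirror image: since $D^j\cong M^{j-1}/K^{j-1}$, your descending induction bounding $0:_A\Ext_A^{r-j}(A/\fa,D^j)$ is the paper's ascending induction (its formula (3)) with the ``remainder'' factor isolated, and both proofs use Lemma 2.1(b) in the same two ways --- to turn connecting homomorphisms into isomorphisms along the complex and to kill the degree-zero term at the top via $\Hom_A(A/\fa,M^r)=0$. All index ranges and exactness claims check out, so there is no gap.
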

\begin{proof}
 For each $j\geq -1$, there are the natural exact
sequences
\begin{equation} 0\longrightarrow M^{j-1}/K^{j-1}\longrightarrow
M^j\longrightarrow M^j/D^j\longrightarrow 0, \tag{1}
\end{equation}
\begin{equation} 0\longrightarrow H^{j-1}\longrightarrow
M^{j-1}/D^{j-1}\longrightarrow M^{j-1}/K^{j-1}\longrightarrow 0.
\tag{2}
\end{equation}
Let $0\leq r< \h_M (\fa)$.

We prove by induction on $j$, $0\leq j\leq r$, that
\begin{equation} \prod_{i= 0}^j (0 :_A
\Ext_A^{r-i}(A/\fa, H^{i-1}))\cdot(0 :_A\Ext_A^{r-j}(A/\fa,
M^{j-1}/K^{j-1}))\subseteq 0 :_A \Ext_A^r(A/\fa, M). \tag{3}
\end{equation} In case $j= 0$, the exact sequence (2) implies the
exact sequence
$$\Ext_A^r(A/\fa, H^{-1})\longrightarrow \Ext_A^r(A/\fa, M)\longrightarrow
\Ext_A^r(A/\fa, M^{-1}/K^{-1})$$ so that $$(0 :_A \Ext_A^r(A/\fa,
H^{-1}))\cdot (0 :_A \Ext_A^r(A/\fa, M^{-1}/K^{-1}))\subseteq 0 :_A
\Ext_A^r(A/\fa, M)$$ and thus the case $j= 0$ is justified.

Assume that $0\leq j< r$ and formula (3) is settled for $j$.
Therefore, by Lemma 2.1 (b), formula (1) implies that
\begin{equation}
\Ext_A^{r-j}(A/\fa, M^{j-1}/K^{j-1})\cong \Ext_A^{r-j-1}(A/\fa,
M^j/D^j). \tag{4}
\end{equation}
On the other hand the exact sequence (2) implies the exact sequence
$$
\Ext_A^{r-j-1}(A/\fa, H^j)\longrightarrow \Ext_A^{r-j-1}(A/\fa,
M^j/D^j)\longrightarrow \Ext_A^{r-j-1}(A/\fa, M^j/K^j),
$$
from which it follows that
\begin{equation}
(0:_A \Ext_A^{r-j-1}(A/\fa, H^j))\cdot(0 :_A \Ext_A^{r-j-1}(A/\fa,
\frac{M^j}{K^j}))\subseteq  \hfill 0 :_A \Ext_A^{r-j-1}(A/\fa,
\frac{M^j}{D^j}). \tag{5}
\end{equation}
Now (4) and (5) imply that
\begin{equation}
(0:_A \Ext_A^{r-j-1}(A/\fa, H^j))\cdot(0 :_A \Ext_A^{r-j-1}(A/\fa,
\frac{M^j}{K^j}))\subseteq 0 :_A \Ext_A^{r-j}(A/\fa,
\frac{M^{j-1}}{K^{j-1}}). \tag{6}
\end{equation}
From (6), it follows that
$$ \prod_{i= 0}^{j+1}(0 :_A \Ext_A^{r-i}(\frac{A}{\fa}, H^{i-1}))\cdot
(0:_A\Ext_A^{r-j-1}(\frac{A}{\fa}, \frac{M^j}{K^j})) =$$
$${\prod}_{i= 0}^{j}(0 :_A \Ext_A^{r-i}(\frac{A}{\fa}, H^{i-1}))\cdot (0 :_A
\Ext_A^{r-j-1}(\frac{A}{\fa}, H^{j}))\cdot(0 :_A
\Ext_A^{r-j-1}(\frac{A}{\fa}, \frac{M^j}{K^j})) \subseteq
$$ $$\prod_{i= 0}^{j}(0 :_A \Ext_A^{r-i}(\frac{A}{\fa}, H^{i-1}))\cdot (0 :_A
\Ext_A^{r-j}(\frac{A}{\fa}, M^{j-1}/K^{j-1})),$$

 \noindent and, by the induction hypothesis (3), it follows that  $$\prod_{i= 0}^{j+1}
 (0 :_A \Ext_A^{r-i}(A/\fa, H^{i-1}))\cdot
(0:_A\Ext_A^{r-j-1}(A/\fa, M^j/K^j))\subseteq 0 :_A \Ext_A^r(A/\fa,
M).$$

 This is the end of the induction argument. Putting $j= r$ in
(3) gives the result, because $\Ext_A^0(A/\fa, M^r)= 0$ by Lemma 2.1
(b) and, as by (1) for $j= r$ there is an embedding $\Ext_A^0(A/\fa,
M^{r-1}/K^{r-1})\hookrightarrow \Ext_A^0(A/\fa, M^r)$, it follows
that $\Ext_A^0(A/\fa, M^{r-1}/K^{r-1})= 0.$
\end{proof}
An immediate corollary to the above result is the following.
\begin{cor}
 Assume that $M$ is a finite $A$--module and that $\fa$ is an
ideal of $A$ such that $\fa M\not= M$. Then, for each integer $r$
with $0\leq r< \h_M (\fa)$, $$\prod_{i= -1}^{r-1}(0 :_A
H^i)\subseteq\cap_{i= 0}^r(0 :_A \Ext_A^i(A/\fa, M)).$$
\end{cor}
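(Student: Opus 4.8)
The plan is to deduce this directly from Proposition 2.2, via one elementary observation about $\Ext$. First I would record the general fact that for any $A$--module $N$ and any integer $j\geq 0$ one has $0:_A N\subseteq 0:_A\Ext_A^j(A/\fa,N)$: indeed, if $x\in A$ annihilates $N$, then multiplication by $x$ is the zero endomorphism of $N$, hence (by functoriality of $\Ext$) induces the zero endomorphism of $\Ext_A^j(A/\fa,N)$, so $x$ annihilates $\Ext_A^j(A/\fa,N)$ as well. In particular each factor $0:_A\Ext_A^{r-i}(A/\fa,H^{i-1})$ appearing in Proposition 2.2 contains $0:_A H^{i-1}$.

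Next I would feed this into Proposition 2.2, but applied at \emph{every} level $\leq r$. Fix an integer $r'$ with $0\leq r'\leq r$; then $r'<\h_M(\fa)$ as well, so Proposition 2.2 gives
$$\prod_{i=0}^{r'}\bigl(0:_A\Ext_A^{r'-i}(A/\fa,H^{i-1})\bigr)\subseteq 0:_A\Ext_A^{r'}(A/\fa,M).$$
By the observation above, the left-hand side contains $\prod_{i=0}^{r'}(0:_A H^{i-1})$, and after the shift $i\mapsto i-1$ this product is exactly $\prod_{i=-1}^{r'-1}(0:_A H^i)$. Hence
$$\prod_{i=-1}^{r'-1}(0:_A H^i)\subseteq 0:_A\Ext_A^{r'}(A/\fa,M)\qquad\text{for all }0\leq r'\leq r.$$

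Finally I would use that a product of ideals only shrinks when more factors are adjoined: since $r'\leq r$,
$$\prod_{i=-1}^{r-1}(0:_A H^i)\subseteq\prod_{i=-1}^{r'-1}(0:_A H^i)\subseteq 0:_A\Ext_A^{r'}(A/\fa,M).$$
As this holds for every $r'$ with $0\leq r'\leq r$, the ideal $\prod_{i=-1}^{r-1}(0:_A H^i)$ lies in $\bigcap_{i=0}^r\bigl(0:_A\Ext_A^i(A/\fa,M)\bigr)$, which is the assertion. There is no genuinely hard step here; the only points that need a little care are that Proposition 2.2 must be invoked at each level $r'\leq r$ rather than only at $r$, and the monotonicity of products of ideals under adjoining extra factors.
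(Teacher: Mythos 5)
Your argument is correct and is exactly the paper's (one-line) proof spelled out in detail: the paper's phrase ``the extension functors are linear'' is precisely your observation that $0:_A N\subseteq 0:_A\Ext_A^j(A/\fa,N)$, and the rest is Proposition 2.2 applied at each level $r'\leq r$ together with the fact that adjoining extra factors shrinks a product of ideals. No further comment is needed.
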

\begin{proof}
It follows by Proposition 2.2 and the fact that the extension
functors are linear.
\end{proof}
\begin{cor}
 Let $M$ be a finite $A$--module
of dimension $n$ and let $\fa$ be an ideal of $A$ such that $\fa
M\not= M$. Assume that $x$ is an element of $A$ such that $xH^i= 0$
for all $i$. Then $x^n$ annihilates all the modules $\Ext_A^r(A/\fa,
M), r= 0, 1, \cdots, \h_M(\fa)-1$ for all ideals $\fa$ of $A$.
\end{cor}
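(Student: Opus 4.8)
The plan is to reduce everything to Corollary 2.4. Fix an ideal $\fa$ of $A$ with $\fa M\neq M$ and an integer $r$ with $0\leq r< \h_M(\fa)$. Corollary 2.4 gives
$$\prod_{i= -1}^{r-1}(0 :_A H^i)\subseteq 0 :_A \Ext_A^r(A/\fa, M).$$
Since $xH^i= 0$ for every $i$, the element $x$ lies in each ideal $0 :_A H^i$, so the product of $x$ with itself once for each index $i= -1, 0, \ldots, r-1$ — that is $x^{r+1}$ — lies in the left--hand side. Hence $x^{r+1}$ annihilates $\Ext_A^r(A/\fa, M)$.

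Next I would check that $r+1\leq n$, so that a single power of $x$ works for all $\fa$ and all admissible $r$ simultaneously. Since $\fa M\neq M$, the set $\Supp_A(M)\cap \V(\fa)$ is nonempty; for any $\fp$ in it one has $\dim M_\fp\leq \dim M= n$, whence $\h_M(\fa)\leq n$ and therefore $r\leq \h_M(\fa)-1\leq n-1$. Consequently $x^n= x^{r+1}x^{n-r-1}$ lies in the ideal $0 :_A \Ext_A^r(A/\fa, M)$, so $x^n\Ext_A^r(A/\fa, M)= 0$. As $\fa$ and $r$ were arbitrary subject to $\fa M\neq M$ and $0\leq r< \h_M(\fa)$, this is exactly the assertion of the corollary (the case $n=0$ being vacuous, since then the range of $r$ is empty).

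There is essentially no genuine obstacle here; the corollary is a formal consequence of 2.4. The only points that need care are the bookkeeping of exponents — the product in Corollary 2.4 runs over the $r+1$ indices $-1,0,\ldots,r-1$, not over $r$ indices — and the uniform bound $\h_M(\fa)\leq \dim M$, which is what allows the fixed exponent $n$ to suffice independently of the choice of $\fa$.
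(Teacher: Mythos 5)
Your argument is correct and is essentially the paper's own: the paper deduces this corollary in one line from the product containment of the preceding corollary (Corollary 2.3 of the paper --- note that your citation of ``Corollary 2.4'' should point there, since 2.4 is the statement being proved), and the bookkeeping you make explicit ($r+1$ factors in the product and $\h_M(\fa)\leq n$, so $r+1\leq n$) is exactly what justifies that one-line deduction. Nothing further is needed.
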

\begin{proof}
It follows clearly from Corollary 2.3
\end{proof}
The following lemma states an easy but essential property of
annihilators of Cousin cohomologies.
\begin{lem}
Assume that $M$ is a finite $A$--module of finite $\dim_A(M)= n$
and that $\mathcal{C}_A(M)$ is finite, then  $\cap_{i\geq -1}(0
:_A H^i)\not\subseteq \cup_{\fp\in\Min_A(M)}\fp$.
\end{lem}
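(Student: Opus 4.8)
The plan is to argue by prime avoidance. Write $I:=\cap_{i\geq -1}(0:_A H^i)$. Since $\dim_A(M)=n<\infty$ and $\h_M(\fq)\leq n$ for every $\fq\in\Supp_A(M)$, we have $M^i=0$, and hence $H^i=0$, for all $i>n$; thus $I=\cap_{i=-1}^{n}(0:_A H^i)$ is a \emph{finite} intersection of ideals. Also $\Supp_A(M)=\V(\Ann_A(M))$ has only finitely many minimal elements, so $\Min_A(M)$ is a finite set. Hence it suffices to show that $I\not\subseteq\fp$ for each $\fp\in\Min_A(M)$. Since $\mathcal{C}_A(M)$ is finite, each $H^i$ is a finite $A$--module, so $\V(0:_A H^i)=\Supp_A(H^i)$ and therefore $\V(I)=\cup_{i=-1}^{n}\Supp_A(H^i)$; thus the required assertion $I\not\subseteq\fp$ is equivalent to $(H^i)_\fp=0$ for all $i$.

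So fix $\fp\in\Min_A(M)$, so that $\dim_{A_\fp}(M_\fp)=0$. I would invoke the fact that the Cousin complex commutes with localization, $(\mathcal{C}_A(M))_\fp\cong\mathcal{C}_{A_\fp}(M_\fp)$ (the $M_\fp$--height filtration of $\Supp_{A_\fp}(M_\fp)$ being exactly the localization of the $M$--height filtration), whence $(H^i)_\fp\cong H^i(\mathcal{C}_{A_\fp}(M_\fp))$ for every $i$. Now $M_\fp$ is a nonzero finite $A_\fp$--module of dimension $0$, hence of finite length, and every prime in $\Supp_{A_\fp}(M_\fp)$ is both minimal and maximal; a direct look at the construction $(*)$ then shows that $\mathcal{C}_{A_\fp}(M_\fp)$ is $0\to M_\fp\overset{\mathrm{id}}{\longrightarrow}M_\fp\to 0\to\cdots$, which is exact, so all its cohomology modules vanish. (Alternatively one can localize the differentials of $\mathcal{C}_A(M)$ at $\fp$ directly: $(d^{-1})_\fp$ is an isomorphism, so $(\coker d^{-1})_\fp=0$, and an easy induction then gives $(M^i)_\fp=0$ for all $i\geq 1$, hence $(K^i)_\fp=(D^i)_\fp$ for all $i$.) Either way, $(H^i)_\fp=0$ for all $i\geq -1$.

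Finally, since each $H^i$ is a finite $A$--module, $(0:_A H^i)_\fp=0:_{A_\fp}(H^i)_\fp=A_\fp$, and because localization commutes with the finite intersection defining $I$ we obtain $I_\fp=A_\fp$, i.e. $I\not\subseteq\fp$. As this holds for every $\fp$ in the finite set $\Min_A(M)$, prime avoidance gives $I\not\subseteq\cup_{\fp\in\Min_A(M)}\fp$, which is the claim. The only step requiring genuine care is the behaviour of $\mathcal{C}_A(M)$ under localization together with the identification of the localized filtration with the $M_\fp$--height filtration; the two hypotheses enter precisely here and just before — finiteness of $\dim_A(M)$ makes the intersection finite (so it localizes correctly), and finiteness of the Cousin cohomologies makes annihilators commute with localization. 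Everything else is formal.
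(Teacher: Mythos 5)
Your proof is correct, and its skeleton is the same as the paper's: show that for each $\fp\in\Min_A(M)$ and each $i$ the annihilator $0:_AH^i$ is not contained in $\fp$, observe that only finitely many $H^i$ are nonzero and that $\Min_A(M)$ is finite, and finish by prime avoidance. The only real difference is how the pointwise statement is obtained. The paper quotes Sharp's estimate $\Supp_A(H^i)\subseteq\{\fq\in\Supp_A(M):\dim_{A_\fq}(M_\fq)\geq i+2\}$ (together with $\V(0:_AH^i)=\Supp_A(H^i)$, which uses finiteness of $H^i$), so that no minimal prime of $M$ can lie in $\Supp_A(H^i)$. You instead derive the special case you need, $(H^i)_\fp=0$ for $\fp$ minimal, from the compatibility of the Cousin complex with localization plus the observation that the Cousin complex of a zero--dimensional module over a local ring is exact. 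That substitution is legitimate (the localization theorem is also in Sharp's paper) and has the merit of being self-contained and of isolating exactly where each hypothesis enters; Sharp's support bound is sharper and would be the tool of choice if one needed to exclude primes of small positive $M$--height as well, but for this lemma the minimal primes suffice. Both arguments use the finiteness of the $H^i$ in the same way, to convert vanishing of a localization into non-containment of the annihilator in $\fp$.
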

\begin{proof}
By \cite[(2.7), vii]{S1}, $\V(0:_A H^i)= \Supp_A(H^i)\subseteq
\{\fp\in\Supp_A(M) : \dim_{A_\fp}(M_\fp)\geq i+2\}$ for all $i\geq
-1$. Hence $(0 :_A H^i)\not\subseteq \cup_{\fp\in\Min_A(M)}\fp$. Now
Prime Avoidance Theorem implies that $\cap_{i\geq -1}(0 :_A
H^i)\not\subseteq \cup_{\fp\in\Min_A(M)}\fp$.
\end{proof}
We are now in a position to prove that the modules with finite
Cousin complexes have uniform local cohomological annihilators. But
one can state more.
\begin{prop}
Assume that $M$ is a finite $A$--module of finite $\dim_A(M)= n$ and
that $\mathcal{C}_A(M)$ is finite. Then there exists an element
$x\in A\setminus \cup_{\fp\in\Min_A(M)}\fp$ such that
$x\Ext_A^i(A/\fm^j, M)= 0$ for all $i< \h_M(\fm)$, all $j\geq 0$ and
all maximal ideals $\fm$ in $\Supp_A(M)$.
\end{prop}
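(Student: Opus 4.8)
The plan is to combine Lemma 2.5, which produces a single element annihilating all the Cousin cohomologies outside the minimal primes, with Corollary 2.4, whose conclusion is uniform in the ideal. First I set $J:=\cap_{i\geq -1}(0:_A H^i)$. Since $\mathcal{C}_A(M)$ is finite and $\dim_A(M)=n$ is finite, Lemma 2.5 gives $J\not\subseteq\cup_{\fp\in\Min_A(M)}\fp$, so by the Prime Avoidance Theorem there is an element $x\in J\setminus\cup_{\fp\in\Min_A(M)}\fp$. By construction $xH^i=0$ for all $i\geq -1$, so $x$ satisfies the hypothesis of Corollary 2.4.

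Next, fix a maximal ideal $\fm\in\Supp_A(M)$ and an integer $j\geq 1$. Since $\fm$ is maximal, $\V(\fm^j)=\V(\fm)=\{\fm\}$, whence $\Supp_A(M)\cap\V(\fm^j)=\{\fm\}$ and therefore $\h_M(\fm^j)=\dim_{A_\fm}(M_\fm)=\h_M(\fm)$; note also that $M_\fm\neq 0$ forces $\fm^j M\neq M$. Applying Corollary 2.4 with the ideal $\fm^j$ (and $\dim_A(M)=n$), the element $x^n$ annihilates $\Ext_A^r(A/\fm^j,M)$ for every $r$ with $0\leq r<\h_M(\fm^j)=\h_M(\fm)$. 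The remaining case $j=0$ is trivial, since $A/\fm^0=0$ and hence $\Ext_A^i(A/\fm^0,M)=0$ for all $i$.

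Finally, each $\fp\in\Min_A(M)$ is prime and $x\notin\fp$, so $x^n\notin\fp$; hence $x^n\in A\setminus\cup_{\fp\in\Min_A(M)}\fp$, and renaming $x^n$ as the desired element $x$ completes the argument. No step presents a genuine obstacle: the substance is already packaged in Lemma 2.5 and Corollary 2.4 (where the annihilator $x^n$ depends only on $x$, not on the ideal), and the only points that remain to be checked are that $\fm^j$ is an admissible choice of ideal for each maximal $\fm\in\Supp_A(M)$ and each $j$, that $\h_M(\fm^j)=\h_M(\fm)$, and that passing to the $n$th power preserves membership outside the minimal primes.
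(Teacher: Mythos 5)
Your argument is exactly the paper's (its proof is the one-line citation of Lemma 2.5 and Corollary 2.4), and you have correctly filled in the routine verifications it leaves implicit: that $\fm^j M\neq M$, that $\h_M(\fm^j)=\h_M(\fm)$ for $j\geq 1$, the trivial case $j=0$, and that replacing $x$ by $x^n$ keeps it outside every minimal prime. The proposal is correct and takes essentially the same route.
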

\begin{proof}
It follows by Lemma 2.5 and Corollary 2.4.
\end{proof}

\begin{thm}
Assume that $M$ is a finite $A$--module of finite $\dim_A(M)= n$
and that $\mathcal{C}_A(M)$ is finite, then $M$ has a uniform
local cohomological annihilator.
\end{thm}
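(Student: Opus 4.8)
The plan is to deduce the theorem directly from Proposition 2.6 together with the standard realization of local cohomology as a direct limit of $\Ext$ modules. Proposition 2.6 already furnishes an element $x \in A\setminus \cup_{\fp\in\Min_A(M)}\fp$ with $x\Ext_A^i(A/\fm^j, M)= 0$ for all $i< \h_M(\fm)$, all $j\geq 0$, and all maximal ideals $\fm\in\Supp_A(M)$. This $x$ is my candidate for a uniform local cohomological annihilator, so the only remaining task is to pass from vanishing on the $\Ext$ modules to vanishing on $\H_\fm^i(M)$.

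First I would recall that for any ideal $\fa$ of $A$ and any $A$--module $M$ there is a natural isomorphism $\H_\fa^i(M)\cong \varinjlim_j \Ext_A^i(A/\fa^j, M)$, the limit being taken over the cofinal system $\{\fa^j\}_{j\geq 1}$. Taking $\fa= \fm$ a maximal ideal, and noting that multiplication by $x$ is compatible with the transition maps in this direct system, the hypothesis $x\Ext_A^i(A/\fm^j, M)= 0$ for every $j$ forces $x\H_\fm^i(M)= 0$. Next I would observe that, since $\fm$ is maximal, $\V(\fm)= \{\fm\}$, so $\h_M(\fm)= \dim M_\fm= \dim_{A_\fm}(M_\fm)$ whenever $\fm\in\Supp_A(M)$; hence the range $i< \h_M(\fm)$ appearing in Proposition 2.6 is exactly the range $i< \dim_{A_\fm}(M_\fm)$ required in the definition of a uniform local cohomological annihilator.

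It then remains to handle maximal ideals $\fm\notin\Supp_A(M)$. For such $\fm$ one has $M_\fm= 0$, so there is $s\in A\setminus\fm$ with $sM= 0$, whence $s\H_\fm^i(M)= 0$; since $\H_\fm^i(M)$ is $\fm$--torsion and $(s)+\fm= A$, this gives $\H_\fm^i(M)= 0$, so the desired vanishing holds trivially (and the condition is vacuous in any case because $M_\fm= 0$). Combining the two cases, $x$ annihilates $\H_\fm^i(M)$ for every maximal ideal $\fm$ of $A$ and every $i< \dim_{A_\fm}(M_\fm)$, while $x\notin \cup_{\fp\in\Min_A(M)}\fp$ by construction; thus $x$ is a uniform local cohomological annihilator of $M$.

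I do not expect a serious obstacle: the substantive work is already done in Proposition 2.6 (resting in turn on Corollary 2.4 and Lemma 2.5). The only points needing a little care are the interchange of multiplication by $x$ with the direct limit defining $\H_\fm^i(M)$, and the identity $\h_M(\fm)= \dim_{A_\fm}(M_\fm)$ for maximal $\fm$; both are routine.
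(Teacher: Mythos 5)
Your proposal is correct and follows essentially the same route as the paper: apply Proposition 2.6 to obtain the element $x$, then use $\H_\fm^i(M)\cong \varinjlim_j \Ext_A^i(A/\fm^j, M)$ to transfer the annihilation to local cohomology. Your extra remarks on the identity $\h_M(\fm)=\dim_{A_\fm}(M_\fm)$ and on maximal ideals outside $\Supp_A(M)$ are details the paper leaves implicit, but they do not change the argument.
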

\begin{proof}
By Proposition 2.6, there is an element $x\in A\setminus
\cup_{\fp\in\Min_A(M)}\fp$ such that $x\Ext_A^i(A/\fm^j, M)= 0$ for
all $i< \h_M(\fm)$, all $j\geq 0$ and all maximal ideals $\fm$ in
$\Supp_A(M)$. Choose a maximal ideal $\fm$ in $\Supp_A(M)$ and $i<
\h_M(\fm)$. As $x\in\Ann_A(\Ext_A^i(A/\fm^j, M))$ for all $j$, we
have
$x\in\Ann_A(\underset{\underset{j}{\longrightarrow}}{\lim}(\Ext_A^i(A/\fm^j,
M)))$, i.e. $x\H_\fm^i(M)= 0$ for all $i< \h_M(\fm)$.
\end{proof}

\begin{cor}
Assume that $A$ has finite dimension and that $ \mathcal{C}_A(A)$ is
finite. Then $A$ has a uniform local cohomological annihilator, and
so $A$ is locally equidimensional and is universally catenary.
\end{cor}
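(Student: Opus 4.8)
The plan is to read the uniform local cohomological annihilator straight off Theorem 2.7 (applied with $M= A$), and then to establish, as a separate general fact, that a Noetherian ring of finite Krull dimension that admits such an annihilator is automatically locally equidimensional and universally catenary. For the first assertion: $A$ is finite over itself, $\dim_A(A)= \dim A$ is finite, and $\mathcal{C}_A(A)$ is finite by hypothesis, so Theorem 2.7 yields a uniform local cohomological annihilator $x$ of $A$.

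For the ring-theoretic part I would first reduce to a complete local situation. If $\fm$ is a maximal ideal of $A$, then the image of $x$ in $A_\fm$ avoids the minimal primes of $A_\fm$ (these contract into $\Min_A(A)$) and still annihilates $\H_\fm^i(A)= \H_{\fm A_\fm}^i(A_\fm)$ for $i< \dim A_\fm$, so $A_\fm$ has a uniform local cohomological annihilator; since these local cohomology modules are Artinian they are unchanged by $\fm$--adic completion, while faithful flatness of $A_\fm\to\widehat{A_\fm}$ together with going--down shows that every minimal prime of $\widehat{A_\fm}$ lies over a minimal prime of $A_\fm$, so $x$ is still a uniform local cohomological annihilator of $\widehat{A_\fm}$. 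Hence it suffices to prove that a complete local ring $(R, \fm)$ of dimension $d$ with a uniform local cohomological annihilator $x$ is equidimensional.

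To do this I would argue by contradiction: suppose $\fp\in\Min_R(R)$ with $\dim R/\fp< d$, and use two facts about attached primes. First, $\Att_R\H_\fm^d(R)= \Assh_R(R)$, so $\fp\notin\Att_R\H_\fm^d(R)$. Second, $\bigcup_{i= 0}^{d}\Att_R\H_\fm^i(R)= \Ass_R(R)$; this comes from local duality --- writing $R$ as a homomorphic image of a Gorenstein local ring $S$, a prime $\fp\in\Ass_R(R)$ has $\depth R_\fp= 0$, hence survives in $\Ass_S\Ext_S^{\h_S\fp}(R, S)$, which is an attached prime of the corresponding $\H_\fm^i(R)$. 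Since $\fp\in\Min_R(R)\subseteq\Ass_R(R)$, these two facts force $\fp\in\Att_R\H_\fm^i(R)$ for some $i< d$, so $x\in\Ann_R\H_\fm^i(R)\subseteq\fp$ --- contradicting that $x$ avoids the minimal primes. Thus $\widehat{A_\fm}$ is equidimensional, and by Ratliff's theorem $A_\fm$ is then equidimensional and universally catenary; as $\fm$ was arbitrary, $A$ is locally equidimensional and universally catenary. (That a uniform local cohomological annihilator forces these properties is essentially the content of \cite{Z}.)

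The step I expect to be the crux is the second attached-primes identity: it is what converts the purely generic information carried by $x$ --- namely that $x$ lies outside every minimal prime --- into a genuine dimension statement, by guaranteeing that \emph{all} associated primes of a complete local ring occur among the attached primes of its lower local cohomology modules. The remaining ingredients, the reductions via flat base change and the appeal to Ratliff's theorem, are routine.
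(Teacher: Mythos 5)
Your argument is correct, but for the second half it takes a genuinely different route from the paper. The paper's proof is a two-line citation: Theorem 2.7 (applied to $M=A$) gives the uniform local cohomological annihilator, and the implication ``uniform annihilator $\Rightarrow$ locally equidimensional and universally catenary'' is simply quoted from Zhou \cite[Theorem 2.1]{Z}. You instead re-prove that implication: you reduce to the complete local ring $\widehat{A_\fm}$ (via flat base change for local cohomology and going--down for minimal primes), rule out a short minimal prime $\fp$ by showing $\fp\in\Att_R\H_\fm^{\dim R/\fp}(R)$ through local duality over a Gorenstein presentation together with the Macdonald--Sharp identification $\Att_R\H_\fm^{\dim R}(R)=\Assh_R(R)$, and then invoke Ratliff's theorem to pass from formal equidimensionality to equidimensionality and universal catenarity of $A_\fm$. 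This is a legitimate, essentially self-contained substitute for the citation; it is also close in spirit to, but not the same as, the paper's own later Proposition 2.11, which obtains local equidimensionality from a uniform annihilator without completing, by filtering $M$ through a submodule $N$ with $\Ass_A(N)$ and $\Ass_A(M/N)$ prescribed and using only the attached primes of the top local cohomology of $M/N$; catenarity in the paper's module setting (Theorem 2.14) then goes through Zhou's Theorem 3.2 rather than Ratliff. What your approach buys is independence from \cite{Z} at the cost of heavier machinery (local duality, attached primes, Ratliff); what the paper's approach buys is brevity. One small imprecision: you assert the equality $\bigcup_{i=0}^{d}\Att_R\H_\fm^i(R)=\Ass_R(R)$, but your duality argument establishes (and your proof only needs) the inclusion $\Ass_R(R)\subseteq\bigcup_{i}\Att_R\H_\fm^i(R)$, indeed with $\fp\in\Att_R\H_\fm^{\dim R/\fp}(R)$; the reverse inclusion is not justified and should be dropped from the statement.
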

\begin{proof}
It is clear from Theorem 2.6 and \cite[Theorem 2.1]{Z}.
\end{proof}

In \cite[Corollary 3.3]{Z}, Zhou proved that any locally
equidimensional Noetherian ring has a uniform local cohomological
annihilator provided it is a homomorphic image of a Cohen--Macaulay
ring of finite dimension. Here we have the following result:
\begin{cor}
Assume that $(A, \fm)$ is local with Cohen--Macaulay formal
fibres. Let $M$ be a finite $A$--module such that it satisfies
($S_2$) and that $\Min_{\widehat{A}}(\widehat{M})=
\Assh_{\widehat{A}}(\widehat{M})$. Then $M$ has a uniform local
cohomological annihilator.
\end{cor}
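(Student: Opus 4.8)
The plan is to reduce the statement to Theorem 2.7 by showing that the hypotheses --- $(A,\fm)$ local with Cohen--Macaulay formal fibres, $M$ satisfying $(S_2)$, and $\widehat{M}$ equidimensional --- force the Cousin complex $\mathcal{C}_A(M)$ to be finite. The key technical input is the behaviour of the Cousin complex under the faithfully flat base change $A\longrightarrow\widehat{A}$. Since the $M$--height filtration localizes well and $A\longrightarrow\widehat{A}$ is faithfully flat with Cohen--Macaulay fibres, one has $\mathcal{C}_{\widehat{A}}(\widehat{M})\cong \mathcal{C}_A(M)\otimes_A\widehat{A}$ (this is the standard compatibility of Cousin complexes with flat local base change under the fibre hypothesis; the Cohen--Macaulay fibre condition guarantees that the height filtration of $\Supp_{\widehat A}(\widehat M)$ is the pullback of that of $\Supp_A(M)$). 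Consequently $H^i(\mathcal{C}_A(M))\otimes_A\widehat{A}\cong H^i(\mathcal{C}_{\widehat{A}}(\widehat{M}))$ for every $i$, and by faithful flatness $H^i(\mathcal{C}_A(M))$ is a finite $A$--module if and only if $H^i(\mathcal{C}_{\widehat A}(\widehat M))$ is a finite $\widehat A$--module.

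Next I would invoke the known characterization of finiteness of the Cousin complex over a complete (or, more generally, a homomorphic image of a Gorenstein) local ring, as established in the cited works of Dibaei--Tousi and Kawasaki (\cite{D}, \cite{DT}, \cite{K}): for a finite module $N$ satisfying $(S_2)$ over such a ring, $\mathcal{C}(N)$ is finite precisely when $N$ is equidimensional. Here $\widehat A$ is complete, hence a homomorphic image of a regular local ring; the module $\widehat M$ satisfies $(S_2)$ because $(S_2)$ ascends along flat maps with Cohen--Macaulay (indeed $(S_1)$ suffices) fibres; and $\widehat M$ is equidimensional by the standing hypothesis $\Min_{\widehat A}(\widehat M)=\Assh_{\widehat A}(\widehat M)$. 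Therefore $\mathcal{C}_{\widehat A}(\widehat M)$ is finite, whence $\mathcal{C}_A(M)$ is finite by the previous paragraph. Since $\dim_A(M)=\dim_{\widehat A}(\widehat M)$ is finite, Theorem 2.7 applies and yields a uniform local cohomological annihilator of $M$.

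The main obstacle I anticipate is justifying the base-change isomorphism $\mathcal{C}_{\widehat A}(\widehat M)\cong\mathcal{C}_A(M)\otimes_A\widehat A$ cleanly, i.e. verifying that the $\widehat M$--height filtration of $\Supp_{\widehat A}(\widehat M)$ is exactly the set of primes lying over the $M$--height filtration of $\Supp_A(M)$. This is where the Cohen--Macaulay formal fibre hypothesis is essential: for $\fP\in\Spec(\widehat A)$ lying over $\fp\in\Spec(A)$ with $\fP\in\Supp_{\widehat A}(\widehat M)$, one needs $\dim_{\widehat A_\fP}(\widehat M_\fP)=\dim_{A_\fp}(M_\fp)+\dim(\widehat A_\fP/\fp\widehat A_\fP)$ and, crucially, that the flat local map $A_\fp\to\widehat A_\fP$ sends the module $(\coker d^{i-2})$ to its analogue, which rests on the fibres being Cohen--Macaulay so that the filtration indices add correctly. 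Once this compatibility is in place, everything else is a direct appeal to results already available in the literature and to Theorem 2.7.
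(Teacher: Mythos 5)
Your overall strategy (prove that $\mathcal{C}_A(M)$ is finite and then invoke Theorem 2.7) is exactly the paper's, which at this point simply quotes \cite[Theorem 2.1]{D}. The gap is in your key step: the claimed isomorphism $\mathcal{C}_{\widehat{A}}(\widehat{M})\cong\mathcal{C}_A(M)\otimes_A\widehat{A}$ is false in general, and the justification you offer --- that Cohen--Macaulay fibres force the $\widehat{M}$--height filtration of $\Supp_{\widehat{A}}(\widehat{M})$ to be the pullback of the $M$--height filtration --- is incorrect. For $\mathfrak{Q}\in\Supp_{\widehat{A}}(\widehat{M})$ lying over $\fp$ one has $\dim_{\widehat{A}_{\mathfrak{Q}}}(\widehat{M}_{\mathfrak{Q}})=\dim_{A_\fp}(M_\fp)+\dim(\widehat{A}_{\mathfrak{Q}}/\fp\widehat{A}_{\mathfrak{Q}})$, and the fibre-dimension term is typically positive; Cohen--Macaulayness of the fibres does nothing to make it vanish. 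Already for the excellent regular ring $A=\mathbb{C}[x,y]_{(x,y)}$ with $M=A$ the generic formal fibre is one-dimensional, so there are height-one primes of $\widehat{A}$ contracting to $(0)$, and the degree-zero terms of the two complexes differ: $\bigl(\bigoplus_{\fp\in\Min_A(M)}M_\fp\bigr)\otimes_A\widehat{A}$ is the (positive-dimensional) formal-fibre localization of $\widehat{M}$, not $\bigoplus_{\mathfrak{Q}\in\Min\Supp_{\widehat{A}}(\widehat{M})}\widehat{M}_{\mathfrak{Q}}$. So there is no ``standard compatibility'' of Cousin complexes with the base change $A\to\widehat{A}$, and your descent of finiteness from $\mathcal{C}_{\widehat{A}}(\widehat{M})$ to $\mathcal{C}_A(M)$ is unsupported.

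That descent is precisely the nontrivial content of the result the paper cites: \cite[Theorem 2.1]{D} proves, via the dualizing complex of $\widehat{A}$ rather than any tensor formula for Cousin complexes, that Cohen--Macaulay formal fibres together with $(S_2)$ and equidimensionality of $\widehat{M}$ force $\mathcal{C}_A(M)$ to have finite cohomologies; Kawasaki's characterization likewise involves conditions on all formal fibres and is not a consequence of flat base change. The second half of your argument (the complete case, where $\widehat{A}$ is a homomorphic image of a Gorenstein ring, plus the final appeal to Theorem 2.7) is fine, but the first half must either be replaced by a direct appeal to \cite[Theorem 2.1]{D}, as the paper does, or by a genuinely different argument comparing the cohomologies of $\mathcal{C}_A(M)$ and $\mathcal{C}_{\widehat{A}}(\widehat{M})$ --- which is the hard part, not a formality.
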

\begin{proof}
By \cite[Theorem 2.1]{D}, $\mathcal{C}_A(M)$ is finite. Now Theorem
2.7 implies the result.
\end{proof}
\begin{cor}
(Compare with \cite[Corollary 3.3 (i)]{Z}). Assume that $(A, \fm)$
is local and that it satisfies $(S_2)$ and all of its formal
fibres are Cohen--Macaulay. Then $A$ has a uniform local
cohomological annihilator.
\end{cor}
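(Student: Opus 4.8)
The plan is to derive this from Corollary 2.9 applied to the finite $A$--module $M= A$. In that case Corollary 2.9 asks for two things: that $A$, regarded as a module over itself, satisfies $(S_2)$, which is exactly the present hypothesis; and that $\Min_{\widehat{A}}(\widehat{A})= \Assh_{\widehat{A}}(\widehat{A})$, i.e. that the completion $\widehat{A}$ is equidimensional. Once both are in place, Corollary 2.9 yields a uniform local cohomological annihilator of $A$. So the whole argument reduces to proving that $\widehat{A}$ is equidimensional.

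First I would carry Serre's condition $(S_2)$ over from $A$ to $\widehat{A}$ along the faithfully flat local homomorphism $A\longrightarrow \widehat{A}$. If $\fq$ is a prime of $\widehat{A}$ and $\fp= \fq\cap A$, then flatness gives $\dim \widehat{A}_\fq= \dim A_\fp+ \dim(\widehat{A}_\fq/\fp\widehat{A}_\fq)$ and $\depth \widehat{A}_\fq= \depth A_\fp+ \depth(\widehat{A}_\fq/\fp\widehat{A}_\fq)$, and the fibre ring $\widehat{A}_\fq/\fp\widehat{A}_\fq$ is Cohen--Macaulay since it is a localization of the Cohen--Macaulay ring $\widehat{A}/\fp\widehat{A}$. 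Substituting the $(S_2)$ estimate for $A_\fp$ into these two equalities gives $\depth \widehat{A}_\fq\geq \min\{2, \dim \widehat{A}_\fq\}$, so $\widehat{A}$ satisfies $(S_2)$; this is just the standard ascent of generalized Serre conditions along flat maps with Cohen--Macaulay fibres.

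It then remains to use the fact that a Noetherian local ring satisfying $(S_2)$ is equidimensional, applied to the complete local ring $\widehat{A}$. One way to argue: $(S_2)$ forces $\Ass\widehat{A}= \Min\widehat{A}$, and, since an $(S_2)$ local ring is connected in codimension one, any two minimal primes of $\widehat{A}$ are joined by a chain of minimal primes in which consecutive members lie in a common height--one prime; localizing at such a prime yields an $(S_2)$, hence Cohen--Macaulay, local ring of dimension one, which is equidimensional, and, $\widehat{A}$ being catenary, this forces the two minimal primes to have equal coheight. Hence all minimal primes of $\widehat{A}$ have coheight $\dim \widehat{A}$, that is $\Min_{\widehat{A}}(\widehat{A})= \Assh_{\widehat{A}}(\widehat{A})$, and Corollary 2.9 delivers the conclusion.

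I expect the equidimensionality of $\widehat{A}$ to be the only genuine obstacle; it is the step where the $(S_2)$ hypothesis and the Cohen--Macaulayness of the formal fibres (equivalently, the completeness of $\widehat{A}$ together with the catenary property it carries) have to be used together, and where one either runs the connectedness/chain--localization argument above or simply quotes the corresponding statement from the literature on generalized Serre conditions. Everything else is a formal consequence of Corollary 2.9.
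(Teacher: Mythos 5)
Your argument is correct, but it is worth noting how it differs from the paper's treatment: the paper proves this corollary by a one-line appeal to \cite[Corollary 2.2]{D}, which already encapsulates the fact that an $(S_2)$ local ring with Cohen--Macaulay formal fibres has a finite Cousin complex, after which Theorem 2.7 applies; you instead reduce to Corollary 2.9 and supply yourself the only nontrivial hypothesis there, namely the equidimensionality of $\widehat{A}$, by ascending $(S_2)$ along the flat local map $A\longrightarrow\widehat{A}$ and then invoking the structure theory of complete $(S_2)$ rings (connectedness in codimension one together with catenariness of $\widehat{A}$). This is a legitimate, more self-contained route: what the paper buys by citation, you buy by two standard facts about flat ascent and complete local rings. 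Two points of precision. First, your justification of the Cohen--Macaulayness of the fibre $\widehat{A}_\fq/\fp\widehat{A}_\fq$ is misstated: this ring is a localization of the formal fibre $\widehat{A}\otimes_A\kappa(\fp)$, which is Cohen--Macaulay by hypothesis, not of $\widehat{A}/\fp\widehat{A}\cong\widehat{A/\fp}$, which is the completion of $A/\fp$ and need not be Cohen--Macaulay; the conclusion stands once the correct ring is named. Second, the blanket claim that an $(S_2)$ local ring is connected in codimension one (and hence equidimensional) is not valid for arbitrary Noetherian local rings --- it requires the ring to be complete, or more generally a catenary homomorphic image of a Cohen--Macaulay ring --- but since you apply it to $\widehat{A}$, which is complete and universally catenary, the chain--localization argument you sketch (a common height--one prime $\fq$ over two minimal primes, $\dim\widehat{A}_\fq=1$, and the dimension formula in the catenary complete domains $\widehat{A}/\fp_i$) does go through. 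With these two clarifications your proposal is a complete proof.
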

\begin{proof}
See \cite[Corollay 2.2]{D}.
\end{proof}

\begin{prop}
Let $M$ be a finite $A$--module such that it has a uniform local cohomological annihilator.
Then $M$ is locally equidimensional.
\end{prop}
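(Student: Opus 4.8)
The plan is to reduce to the local, then complete, case and then invoke the definition of a uniform local cohomological annihilator together with the standard characterisation of equidimensionality via local cohomology. First I would fix a maximal ideal $\fm \in \Supp_A(M)$ and show that $M_\fm$ is equidimensional over $A_\fm$; since a finite module is locally equidimensional precisely when $M_\fm$ is equidimensional for every maximal ideal of its support, this suffices. Write $n := \dim_{A_\fm}(M_\fm)$. By hypothesis there is $x \in A \setminus \bigcup_{\fp \in \Min_A(M)} \fp$ with $x\H_\fm^i(M) = 0$ for all $i < n$.

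Next I would pass to the completion. Localising and completing, the element (the image of) $x$ lies outside every minimal prime of $\Supp_{A_\fm}(M_\fm)$, hence, because $M_\fm$ need not itself be unmixed, one must be a little careful: the key point is that $x \notin \fp$ for every $\fp \in \Min_A(M)$, and the minimal primes of $\widehat{M_\fm}$ lie over minimal primes of $M_\fm$, which lie over minimal primes of $M$ contained in $\fm$. The flat base change isomorphism $\H_{\fm\widehat{A_\fm}}^i(\widehat{M_\fm}) \cong \H_\fm^i(M) \otimes_{A_\fm} \widehat{A_\fm}$ (equivalently $\H_{\widehat{\fm}}^i(\widehat{M_\fm}) \cong \widehat{\H_\fm^i(M_\fm)}$ for these Artinian-after-completion modules, using that $\H_\fm^i(M)$ is $\fm$-torsion) shows that $x$ still annihilates $\H_{\widehat{\fm}}^i(\widehat{M_\fm})$ for all $i < n$, and $\dim_{\widehat{A_\fm}}(\widehat{M_\fm}) = n$ as well. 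Thus it is enough to prove the statement for a complete local ring $(B,\fn)$ and a finite $B$-module $N$ of dimension $n$ admitting $x \in B$, not in any minimal prime of $\Supp_B(N)$ that is also minimal of maximal coheight, with $x\H_\fn^i(N) = 0$ for $i < n$.

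For the complete local case I would use the following: if $\fp \in \Supp_B(N)$ is a minimal prime with $\dim(B/\fp) < n$, then I want a contradiction. By Grothendieck's non-vanishing together with the behaviour of local cohomology under the short exact sequences coming from a prime filtration of $N$, the module $N$ has a submodule (or quotient) supported precisely on $V(\fp)$ contributing to $\H_\fn^{\dim(B/\fp)}(N)$; more precisely one shows $\H_\fn^{d}(N) \neq 0$ where $d = \min\{\dim(B/\fq) : \fq \in \Ass_B(N)\}$ is the smallest dimension of an associated prime, and that this bottom piece is not killed by any $x$ avoiding the relevant minimal primes. Concretely: pick $\fp \in \Min_B(N)$ with $d := \dim(B/\fp)$ minimal; then $\H_\fn^d(N)$ surjects onto $\H_\fn^d(N/\fp N) \neq 0$ (nonzero by Grothendieck, using $\dim(N/\fp N) = d$), and since $x \notin \fp$, multiplication by $x$ is injective on $B/\fp$, hence on the top local cohomology $\H_\fn^d(B/\fp)$ of the domain $B/\fp$ — forcing $x\H_\fn^d(N/\fp N) \neq 0$ and, tracing back, $x\H_\fn^d(N) \neq 0$, contradicting $d < n$. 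Therefore every minimal prime of $\Supp_B(N)$ has coheight $n$, i.e. $N$ is equidimensional, and unwinding the reductions gives that $M$ is locally equidimensional.

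The main obstacle I anticipate is the step asserting $x\H_\fn^d(N/\fp N) \neq 0$: one needs that multiplication by $x$ stays injective (or at least nonzero) on the top local cohomology module of the domain $B/\fp$, and that this survives the surjection $\H_\fn^d(N) \twoheadrightarrow \H_\fn^d(N/\fp N)$ obtained from a filtration whose intermediate factors $B/\fq$ all have $\dim(B/\fq) \geq d$. Handling the filtration carefully — so that the relevant higher local cohomologies of the smaller-dimensional factors vanish in degree $d$ only when needed, and controlling which primes $x$ must avoid — is the delicate bookkeeping; the domain case itself is just the fact that a nonzerodivisor acts injectively on $\H_\fn^{\dim(B/\fp)}(B/\fp)$, which follows from the long exact sequence of $0 \to B/\fp \xrightarrow{x} B/\fp \to B/(x,\fp) \to 0$ and the dimension drop $\dim(B/(x,\fp)) < d$.
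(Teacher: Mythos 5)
Your reduction to the local case is fine (and the passage to the completion, while not actually needed, can be justified as in the paper's Theorem 2.13 via going-down; equidimensionality also descends back). The genuine gap is in the key step of the local argument: the claimed surjection $\H_\fn^d(N)\twoheadrightarrow\H_\fn^d(N/\fp N)$ is unjustified. From $0\to\fp N\to N\to N/\fp N\to 0$ the obstruction is the connecting map into $\H_\fn^{d+1}(\fp N)$, and since $\dim(\fp N)$ is typically $n>d$ there is no Grothendieck vanishing to kill it; this is exactly the crux of the whole proposition, since a priori $\H_\fn^d(N)$ could fail to see the small component at all. Nor can the ``prime filtration with all factors of dimension $\geq d$'' rescue it: a prime filtration of $N$ has factors $B/\fq$ with $\fq$ ranging over $\Supp_B(N)$, which unavoidably includes primes of coheight smaller than $d$ (e.g.\ $\fn$ itself), so the bookkeeping you propose cannot be arranged. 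A secondary slip: multiplication by $x\notin\fp$ on $\H_\fn^d(B/\fp)$ is \emph{surjective}, not injective --- the dimension drop $\dim B/(\fp+xB)<d$ kills $\H^d$ of the cokernel, while the kernel of $x$ on $\H^d$ is a quotient of $\H^{d-1}_\fn(B/(\fp+xB))$, which need not vanish; surjectivity does suffice for the nonvanishing you want, but only once the missing surjection above is supplied.

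The paper's proof avoids the problem by a different decomposition. After reducing to $(A,\fm)$ local with $d=\dim_A(M)$ and supposing $\fp\in\Min_A(M)$ with $c=\dim A/\fp<d$, it sets $S=\{\fq\in\Min_A(M):\dim A/\fq\le c\}$, $T=\Ass_A(M)\setminus S$, and takes a submodule $N\subseteq M$ with $\Ass_A(N)=T$ and $\Ass_A(M/N)=S$, so that $\dim_A(M/N)=c$; it then chooses $y\in(0:_A N)\setminus\bigcup_{\fq\in S}\fq$ (possible since $\sqrt{0:_A N}=\bigcap_{\fq\in T}\fq\not\subseteq\fq_0$ for $\fq_0\in S$). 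In the exact sequence $\H_\fm^c(M)\to\H_\fm^c(M/N)\to\H_\fm^{c+1}(N)$ the right-hand term is killed by $y$ and the left-hand term by $x$ (as $c<d$), so $xy$ annihilates the \emph{top} local cohomology $\H_\fm^c(M/N)$; since its attached primes are $\Assh_A(M/N)\ni\fp$ (Brodmann--Sharp 7.2.11 and 7.3.2), this forces $xy\in\fp$, contradicting $x\notin\fp$ and $y\notin\fp$. The auxiliary annihilating element $y$ is precisely what replaces the surjectivity your argument is missing, and no completion is required. (Your route could be repaired over the completion via local duality --- for $\fq$ minimal in the support and $x\notin\fq$, localizing the Matlis dual at $\fq$ gives $x\H^{\dim B/\fq}_\fn(N)\neq 0$ --- but that is a genuinely different mechanism from the filtration argument you sketch.)
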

\begin{proof}
Let $\fm\in\Max\Supp_A(M)$. We will show that
$\dim_{A_\fm}(M_\fm)= \dim A_\fm/\fp A_\fm$ for all $\fp\in\Spec
A$ with $\fp\in\Min_A(M)$ and $\fp\subseteq \fm$. By assumption,
there exists an element $x\in A\setminus\cup_{\fp\in\Min_A(M)}
\fp$ such that $x\H_\fm^i(M)= 0$ for all $i< \dim_{A_\fm}(M_\fm)$.
As $x\in A_\fm\setminus\underset{\fp
A_\fm\in\Min_{A_\fm}(M_\fm)}{\cup} \fp A_\fm$, and
$\H_\fm^i(M)\cong \H_{\fm A_\fm}^i(M_\fm)$ by using the definition
of local cohomology, we may assume that $(A, \fm)$ is local with
the maximal ideal $\fm$ and write $d:= \dim_A(M)$.

Assume, to the contrary, that there exists $\fp\in\Min_A(M)$ with
$c:= \dim A/\fp < d$. Set $S= \{\fq\in\Min_A(M): \dim A/\fq\leq c\}$
and $T= \Ass_A(M)\setminus S$. There exists a submodule $N$ of $M$
such that $\Ass_A(N)= T$ and $\Ass_A(M/N)= S$. Note that
$\dim_A(M/N)= c$ and that $\dim_A(N)= d$. As $\sqrt{0:_A N}=
\cap_{\fq\in T}\fq$, it follows that there exists an element $y\in
0:_A N\setminus\cup_{\fq\in S}\fq$. Thus, trivially, $y\H_\fm^i(N)=
0$ for all $i\geq 0$. The exact sequence $0\longrightarrow
N\longrightarrow M\longrightarrow M/N\longrightarrow 0$ implies the
exact sequence
$\H_\fm^i(M)\longrightarrow\H_\fm^i(M/N)\longrightarrow\H_\fm^{i+1}(N)$.
As $x\H_\fm^i(M)= 0$ for all $i< d$, it follows that
$xy\H_\fm^i(M/N)= 0$ for all $i< d$. In particular,
$xy\H_\fm^c(M/N)= 0$. Thus $xy\in\cap_{\fq\in\Assh_A(M/N)}\fq$ (c.f.
\cite[Proposition 7.2.11 and Theorem 7.3.2]{BS}). Therefore
$xy\in\fp$ by the choice of $\fp$. As $\fp\in S\cap\Min_A(M)$, this
is a contradiction.
\end{proof}

Now we can state the following result which partially extends
Corollary 2.8.
\begin{cor}
Let $M$ be a finite $A$--module such that its Cousin complex
$\mathcal{C}_A(M)$ is finite. Then $M$ is locally equidimensional.
\end{cor}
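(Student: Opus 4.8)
The plan is to deduce this immediately from the machinery already assembled in the section. The key observation is that the hypothesis ``$\mathcal{C}_A(M)$ is finite'' forces $\dim_A(M)$ to be finite (since $M$ is finite over a Noetherian ring and the Cousin complex has only finitely many nonzero cohomologies when it is finite, but more directly: the length of the $M$-height filtration is bounded once the cohomologies are finite — in any case one should first record that $\dim_A(M)<\infty$ so that Theorem 2.7 applies). Granting that, Theorem 2.7 tells us that $M$ has a uniform local cohomological annihilator.

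Then I would simply invoke Proposition 2.12, which states precisely that a finite $A$-module with a uniform local cohomological annihilator is locally equidimensional. So the proof is the two-line chain: $\mathcal{C}_A(M)$ finite $\Rightarrow$ (by Theorem 2.7) $M$ has a uniform local cohomological annihilator $\Rightarrow$ (by Proposition 2.12) $M$ is locally equidimensional.

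The only point requiring a moment's care — and the place I would put the one substantive sentence — is the finiteness of $\dim_A(M)$, which Theorem 2.7 carries as a hypothesis. If the paper's conventions do not already guarantee this, one argues that finiteness of all $H^i$ together with the structure of the Cousin complex (each $M^i$ is supported on $\{\fp : \dim_{A_\fp}(M_\fp)\ge i\}$, cf. \cite[(2.7)]{S1}) forces the filtration to terminate, hence $n:=\dim_A(M)<\infty$; alternatively, one restricts attention to each $A_\fm$ with $\fm\in\Max\Supp_A(M)$, where $\dim_{A_\fm}(M_\fm)$ is automatically finite, and notes that local equidimensionality is a condition checked one maximal ideal at a time. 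I do not expect any genuine obstacle here: the corollary is a formal consequence of the two results it cites.

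\begin{proof}
Since $\mathcal{C}_A(M)$ is finite, $\dim_A(M)$ is finite (the $M$--height filtration of $\Supp_A(M)$ terminates, because each $M^i$ is supported in $\{\fp\in\Supp_A(M) : \dim_{A_\fp}(M_\fp)\geq i\}$ and all the cohomologies $H^i$ are finite; alternatively one works over $A_\fm$ for $\fm\in\Max\Supp_A(M)$, where $\dim_{A_\fm}(M_\fm)<\infty$, and local equidimensionality is a condition on each such $A_\fm$). Hence Theorem 2.7 applies and $M$ has a uniform local cohomological annihilator. Now Proposition 2.12 yields that $M$ is locally equidimensional.
\end{proof}
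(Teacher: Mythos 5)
Your proof is correct and is essentially the paper's own argument: Theorem 2.7 produces a uniform local cohomological annihilator, and the preceding proposition (numbered 2.11 in the paper, not 2.12) then gives local equidimensionality. One caveat: your first justification that finiteness of the Cousin cohomologies forces $\dim_A(M)<\infty$ does not hold in general (a regular Noetherian ring of infinite Krull dimension has an exact, hence finite, Cousin complex), but your alternative remark --- passing to $A_\fm$ for each $\fm\in\Max\Supp_A(M)$, where the dimension is finite and the Cousin complex together with the finiteness of its cohomologies localizes --- does settle the point, which the paper leaves implicit.
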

\begin{proof}
The proof is clear from Theorem 2.7 and Proposition 2.11.

\end{proof}
Now it is easy to provide an example of a module whose
Cousin complex has at least one non--finite cohomology.\\

{\bf Example.} Consider a Noetherian local ring $A$ of dimension $d>
2$. Choose any pair of prime ideals $\fp$ and $\fq$ of $A$ with
conditions $\dim A/\fp= 2$, $\dim A/\fq= 1$, and $\fp\not\subseteq
\fq$. Then $\Min_A(A/\fp\fq)= \{\fp, \fq\}$ and so $A/\fp\fq$ is not
an equidimensional $A$--module and thus its
Cousin complex is not finite.\\

We are now ready to present the following result which, for a finite
module $M$, shows connections of finiteness of its Cousin complex,
existence of a uniform local cohomological annihilator for $M$, and
equidimensionality of $\widehat{M}$.
\begin{thm}
Let $A$ be a local ring with Cohen--Macaulay formal fibres. Assume
that $M$ is a finite $A$--module which satisfies the condition
$(S_2)$ of Serre. Then the following statements are equivalent.
\begin{itemize}
\item[(i)] $\Min_{\widehat{A}}(\widehat{M})=
\Assh_{\widehat{A}}(\widehat{M})$.
\item[(ii)] The Cousin complex of $M$ is
finite.
\item[(iii)] $M$ has a uniform local cohomological annihilator.
\end{itemize}
\end{thm}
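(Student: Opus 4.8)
The plan is to establish the cycle of implications (ii) $\Rightarrow$ (iii) $\Rightarrow$ (i) $\Rightarrow$ (ii), since each individual link is already essentially available in the machinery assembled above. The implication (ii) $\Rightarrow$ (iii) is immediate: it is precisely the content of Theorem 2.7 (no hypothesis beyond finiteness of $\mathcal{C}_A(M)$ and finite Krull dimension is needed, and a module over a local ring automatically has finite dimension). The implication (i) $\Rightarrow$ (ii) is also already packaged for us: under the standing hypotheses that $(A,\fm)$ is local with Cohen--Macaulay formal fibres and $M$ satisfies $(S_2)$, the cited result \cite[Theorem 2.1]{D} says exactly that $\mathcal{C}_A(M)$ is finite when $\widehat{M}$ is equidimensional, i.e. when $\Min_{\widehat{A}}(\widehat{M})= \Assh_{\widehat{A}}(\widehat{M})$. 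So the only implication requiring genuine work is (iii) $\Rightarrow$ (i).

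For (iii) $\Rightarrow$ (i), I would first invoke Proposition 2.11: if $M$ has a uniform local cohomological annihilator then $M$ is locally equidimensional, and in particular (since $A$ is local) $M$ itself is equidimensional, i.e. $\Min_A(M)= \Assh_A(M)$. The task is then to promote equidimensionality of $M$ to equidimensionality of $\widehat{M}$. This is where the hypotheses on $A$ come in. The general principle is that the fibres of $A\to\widehat{A}$ being Cohen--Macaulay (hence equidimensional) forces $\widehat{M}$ to inherit equidimensionality from $M$: for $\fp\in\Min_A(M)$ the minimal primes of $\widehat{M}$ lying over $\fp$ are exactly the minimal primes of $\widehat{A}/\fp\widehat{A}$, and by the dimension formula for a flat local homomorphism with Cohen--Macaulay (equidimensional) fibres, $\dim(\widehat{A}/\fP) = \dim(A/\fp) + \dim(\widehat{A_\fp}/\fp\widehat{A_\fp})$ is the same for every such $\fP$. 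Combined with equidimensionality of $M$ (all $\dim(A/\fp)$ equal) and the fact that $\dim\widehat{M} = \dim M$, one concludes $\dim(\widehat{A}/\fP)=\dim\widehat{M}$ for every $\fP\in\Min_{\widehat A}(\widehat M)$; here one should note that the $(S_2)$ condition (or the Cohen--Macaulayness of fibres) also rules out the appearance of spurious embedded-type behaviour so that $\Ass_{\widehat A}(\widehat M)$ behaves well under going to the completion. Thus $\widehat M$ is equidimensional.

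The main obstacle is the dimension bookkeeping in (iii) $\Rightarrow$ (i): one must be careful that $\Min_{\widehat A}(\widehat M)$ is described correctly in terms of $\Min_A(M)$ and the fibre rings, and that the fibre-dimension term $\dim(\widehat{A_\fp}/\fp\widehat{A_\fp})$ is genuinely independent of the choice of $\fp\in\Min_A(M)$ — this is where one really uses that the formal fibres are Cohen--Macaulay rather than merely that $A\to\widehat A$ is flat. In fact, rather than re-deriving all of this, the cleanest route is probably to cite the existing literature on the ascent of equidimensionality along completion for rings with Cohen--Macaulay formal fibres (the same body of results underlying \cite[Theorem 2.1]{D}), applied to $A/\Ann_A M$, which is again $(S_2)$ over a local ring with Cohen--Macaulay formal fibres; then equidimensionality of $M$ over $A$ transfers to equidimensionality of $\widehat M$ over $\widehat A$ directly. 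With (iii) $\Rightarrow$ (i) in hand, the triangle closes and the theorem is proved.
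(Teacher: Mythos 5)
Your handling of (ii) $\Rightarrow$ (iii) (Theorem 2.7) and (i) $\Rightarrow$ (ii) (\cite[Theorem 2.1]{D}) agrees with the paper. The gap is in (iii) $\Rightarrow$ (i). After invoking Proposition 2.11 you keep only the consequence ``$M$ is equidimensional'' and then try to ascend equidimensionality along $A\to\widehat{A}$ using the Cohen--Macaulay formal fibres. There is no such ascent theorem, and the general principle you appeal to is false: by Ratliff's results, equidimensionality of $\widehat{M}$ (quasi-unmixedness of $A/\Ann_A M$) is equivalent to equidimensionality \emph{together with universal catenarity} of $A/\Ann_A M$, and Cohen--Macaulayness (even geometric regularity) of the formal fibres does not imply universal catenarity --- Ogoma-type examples give normal (hence $(S_2)$) Noetherian local domains with good formal fibres that are not universally catenary, so the ring is trivially equidimensional while its completion is not. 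Your displayed formula $\dim(\widehat{A}/\mathcal{Q})=\dim(A/\fp)+\dim(\mbox{fibre term})$ is precisely the statement that fails without catenarity hypotheses: flatness gives the height formula $\dim \widehat{A}_{\mathcal{Q}}=\dim A_\fp+\dim(\widehat{A}_{\mathcal{Q}}/\fp\widehat{A}_{\mathcal{Q}})$ and the equality $\dim(\widehat{A}/\fp\widehat{A})=\dim(A/\fp)$, but it says nothing about the coheight of an individual minimal prime $\mathcal{Q}$ of $\fp\widehat{A}$; CM fibres only control $\Ass_{\widehat{A}}(\widehat{M})$, not these dimensions. (Also, $(S_2)$ for the module $M$ does not make the ring $A/\Ann_A M$ an $(S_2)$ ring, so the reduction in your last step is not available as stated.) The very formulation of the theorem signals this: if (i) followed from equidimensionality of $M$ under the standing hypotheses, condition (i) would have been stated over $A$ rather than over $\widehat{A}$.

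The paper's proof of (iii) $\Rightarrow$ (i) avoids all of this by transferring the uniform local cohomological annihilator itself, not just equidimensionality: since $\H_\fm^i(M)\cong \H_{\widehat{\fm}}^i(\widehat{M})$ and $\dim M=\dim\widehat{M}$, the element $x$ kills $\H_{\widehat{\fm}}^i(\widehat{M})$ for all $i<\dim\widehat{M}$; and for any $\mathcal{Q}\in\Min_{\widehat{A}}(\widehat{M})$, going-down for the flat map $A\to\widehat{A}$ gives $\mathcal{Q}\cap A\in\Min_A(M)$, whence $x\notin\mathcal{Q}$. Thus $\widehat{M}$ has a uniform local cohomological annihilator, and Proposition 2.11 applied over $\widehat{A}$ yields $\Min_{\widehat{A}}(\widehat{M})=\Assh_{\widehat{A}}(\widehat{M})$; note this direction uses neither $(S_2)$ nor the Cohen--Macaulay formal fibres. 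You should replace your ascent-of-equidimensionality step by this transfer argument.
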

\begin{proof}
(i) $\Rightarrow$ (ii) by \cite[Theorem 2.1]{D}.\\

(ii) $\Rightarrow$ (iii). This is Theorem 2.7.\\

(iii) $\Rightarrow$ (i). There exists an element $x\in A\setminus
\cup_{\fp\in\Min_A(M)}\fp$ such that $x\H_\fm^i(M)= 0$ for all $i<
\dim_A(M)$, and, by artinian--ness of local cohomology modules,
$x\H_{\widehat{\fm}}^i(\widehat{M})= 0$ for all $i<
\dim_{\widehat{A}}(\widehat{M})$. Assume that $\mathcal{Q}$ is an
element of $\Min_{ \widehat{A}}( \widehat{M})$. Note that $0:_A
M\subseteq \mathcal{Q}\cap A$ and, by Going Down Theorem,
$\mathcal{Q}\cap A\in\Min_A(M)$. Hence $x\not\in\mathcal{Q}$.
Therefore $ \widehat{M}$ has a uniform local cohomological
annihilator. Now, Proposition 2.11 implies that
$\Min_{\widehat{A}}(\widehat{M})= \Assh_{\widehat{A}}(\widehat{M})$.
\end{proof}
We end this section by showing that any finite $A$--module $M$ which
has a uniform local cohomological annihilator is universally
catenary, that is the ring $A/(0:_A M)$ is universally catenary.
\begin{thm}
Let $M$ be a finite $A$--module that has a uniform local
cohomological annihilator. Then $A/(0:_A M)$ has a uniform local
cohomological annihilator and so $A/(0:_A M)$ is universally
catenary.
\end{thm}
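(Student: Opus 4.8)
The plan is to reduce everything to the ring $B:=A/(0:_AM)$. Writing $I=0:_AM$, recall that for a $B$--module $N$ and an ideal $\fa\supseteq I$ the local cohomology $\H_\fa^i(N)$ computed over $A$ coincides with $\H_{\fa/I}^i(N)$ computed over $B$ (Independence Theorem, cf. \cite{BS}), that $\Min_A(M)$ corresponds bijectively to $\Min(B)$, and that $\dim_{A_\fm}(M_\fm)=\dim B_{\fm/I}$ for every maximal ideal $\fm\in\Supp_A(M)$. Hence $x$ is a uniform local cohomological annihilator of $M$ over $A$ precisely when its image witnesses the corresponding property for $M$ viewed as a faithful finite $B$--module, and it is enough to prove that $B$ itself has a uniform local cohomological annihilator; once that is done, $B=A/(0:_AM)$ is universally catenary by \cite[Theorem 2.1]{Z}, exactly as in Corollary 2.8. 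From now on I therefore replace $A$ by $B$, so that $M$ is faithful; then $\dim_{A_\fm}(M_\fm)=\dim A_\fm$ for every maximal ideal $\fm$ (since $\Supp_A M=\Spec A$ and $\Ann_{A_\fm}(M_\fm)=0$), and $A$ is locally equidimensional by Proposition 2.11.

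Fix a maximal ideal $\fm$, put $d=\dim A_\fm$, and aim to annihilate $\H_\fm^i(A)$ for $i<d$ by a fixed element. Choosing generators $m_1,\dots,m_n$ of $M$, faithfulness makes $a\mapsto(am_1,\dots,am_n)$ injective, so there is an exact sequence
\[
0\longrightarrow A\longrightarrow M^n\longrightarrow C\longrightarrow 0 ,
\]
whose long exact sequence, combined with $x\H_\fm^i(M)=0$ for $i<d$, shows that $x\H_\fm^i(A)$ lies in the image of $\H_\fm^{i-1}(C)$ for each $i<d$; thus everything reduces to annihilating, by finitely many applications of $x$, the groups $\H_\fm^{i}(C)$ for $i<d-1$. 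To control these I would pass to the completion $R:=\widehat{A_\fm}$: the modules $\H_\fm^i(A)=\H_{\widehat\fm}^i(R)$ and $\H_\fm^i(M)=\H_{\widehat\fm}^i(\widehat{M_\fm})$ are Artinian, and by Going Down for the flat map $A_\fm\to R$ every minimal prime of $R$ contracts to a minimal prime of $A_\fm$, so $x$ remains a uniform local cohomological annihilator of the faithful $R$--module $\widehat{M_\fm}$; Proposition 2.11 then forces $R$ to be equidimensional. Being complete local, $R$ is catenary, hence biequidimensional, and is a homomorphic image of a Gorenstein local ring of finite dimension, so \cite{HH1} applies — equivalently, via the dualizing complex $\omega_R^\bullet$ (biequidimensionality makes $\Supp_R H^{-i}(\omega_R^\bullet)\cap\Min R=\varnothing$ for $i<d$) and Local Duality ($\H_{\widehat\fm}^i(R)^{\vee}\cong H^{-i}(\omega_R^\bullet)$) — to give $\bigcap_{i<d}\Ann_R\H_{\widehat\fm}^i(R)\not\subseteq\bigcup_{\fp\in\Min R}\fp$, i.e. $R$ has a uniform local cohomological annihilator.

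The genuine difficulty lies in two places. First, subobjects and quotients of $M$ — the cokernel $C$, and the submodules $\fq M$ that appear when one tries to pass to quotient domains $A/\fq$ along a prime filtration of $C$ (where $M/\fq M$ is again a faithful finite $A/\fq$--module) — do not automatically inherit a uniform local cohomological annihilator, so controlling $\H_\fm^\bullet(C)$ below the right degree is not purely formal and must be organized into an induction on $\dim M$. Second, the computation above produces an annihilator over $\widehat{A_\fm}$, not over $A_\fm$, and one ultimately needs a single element working at every maximal ideal at once. Both are handled by using the hypothesis non-cosmetically: the ambient global element $x$, which at each $\fm$ annihilates $\H_\fm^i(M)$ for $i<\dim A_\fm$ and avoids the finitely many minimal primes of $A$, supplies the coherence needed to propagate the annihilation through the sequence $0\to A\to M^n\to C\to 0$ and, via Prime Avoidance, to assemble one global element $x'$ (a power of $x$ when $\dim_AM<\infty$) with $x'\H_\fm^i(A)=0$ for all $i<\dim A_\fm$ and all maximal $\fm$; since $x'\notin\bigcup_{\fp\in\Min A}\fp$, this $x'$ is the required uniform local cohomological annihilator, and the universal catenarity follows as recorded in the first paragraph. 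I expect the passage from the module $M$ to the ring $A/(0:_AM)$ — that is, getting past the non-inheritance of annihilators by $C$ and by the $\fq M$ — to be the main obstacle; the completion and duality input is routine once the inductive framework is in place.
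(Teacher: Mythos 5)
Your first paragraph (the reduction to $B=A/(0:_AM)$ with $M$ faithful, and the appeal to Zhou's Theorem 2.1 for universal catenarity at the end) is fine, but from that point on the proposal is a plan whose two hard steps you yourself flag and then do not carry out, so there is a genuine gap. Concretely: (1) in the sequence $0\to A\to M^n\to C\to 0$ the cokernel $C$ is a quotient of $M^n$ with $\dim C$ possibly equal to $\dim A$, so the ``induction on $\dim M$'' you invoke to control $\H_\fm^{i}(C)$ for $i<d-1$ has no decreasing parameter and is never set up; nothing in the hypothesis gives you an annihilator for $\H_\fm^\bullet(C)$. (2) The completion route produces, for each $\fm$ separately, an element of $\widehat{A_\fm}$ annihilating $\H^i_{\widehat\fm}(R)$ for $i<d$; you give no mechanism to descend this to an element of $A_\fm$ avoiding the minimal primes of $A_\fm$, let alone to a single element of $A$ working at all (possibly infinitely many) maximal ideals simultaneously --- Prime Avoidance does not apply to an infinite family, and the parenthetical claim that ``a power of $x$'' works is asserted, not proved. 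As written, the argument establishes nothing beyond Proposition 2.11 and the reduction to the faithful case.

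The paper's proof resolves exactly these two difficulties by a different mechanism, which is the idea missing from your proposal. First, by Zhou's Theorem 3.2 a locally equidimensional ring has a uniform local cohomological annihilator once each $A/\fp$, $\fp$ a minimal prime, has one; this replaces your global assembly problem by finitely many quotient domains. Second, for fixed $\fp\in\Min_A(M)$ one uses that $M_\fp$ has finite length $t$ over $A_\fp$ to build a chain $0\subset N_1\subset\cdots\subset N_t\subseteq M$ with short exact sequences $0\to A/\fp\to M/N_{i}\to M/N_{i+1}\to 0$ and with $\fp\notin\Ass_A(M/N_t)$, so that some $y\in (0:_A(M/N_t))\setminus\fp$ kills the last term outright. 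Chasing the long exact sequences (as in Zhou's Lemma 3.1) transfers the $x$--annihilation of $\H^i_\fm(M)$, $i<\h_M(\fm)=\h_{A/\fp}(\fm)$ (local equidimensionality from Proposition 2.11 is used here), into $(xy)^l\H^i_\fm((A/\fp)_\fm)=0$. This keeps the whole argument over $A$ itself, needs no completion, no dualizing complex, and no Hochster--Huneke input, and produces explicit global elements. If you want to salvage your approach you would have to replace the single sequence $0\to A\to M^n\to C\to 0$ by this length filtration at each minimal prime; as it stands the proposal does not prove the theorem.
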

\begin{proof}
By Proposition 2.11, $A/(0:_A M)$ is locally equidimensional. By
\cite[Theorem 3.2]{Z}, it is enough to show that $\frac{A}{0:_A
M}/\frac{\fp}{0:_A M}\cong A/\fp$ has a uniform local cohomological
annihilator for each minimal prime ideal $\fp$ of $M$. We prove it
by using the ideas given in the proof of
\cite[Theorem 3.2]{Z}. \\

Assume that $\fp\in\Min_A(M)$ and that $\fm$ is a maximal ideal
containing $\fp$. As $M_\fp$ is an $A_\fp$--module of finite
length we set $t:= l_{A_\fp}(M_\fp)$. Then there exists a chain of
submodules $0\subset N_1\subset N_2\subset\cdots\subset N_t
\subseteq M$ such that
$$\begin{array}{llllll}
&0\longrightarrow A/\fp\longrightarrow M\longrightarrow
M/N_0\longrightarrow 0,\\
&0\longrightarrow A/\fp\longrightarrow M/N_0\longrightarrow
M/N_1\longrightarrow 0,\\
&\vdots \\
&0\longrightarrow A/\fp\longrightarrow M/N_{t-2}\longrightarrow
M/N_{t-1}\longrightarrow 0,\\
&0\longrightarrow A/\fp\longrightarrow M/N_{t-1}\longrightarrow
M/N_t\longrightarrow 0.\end{array}$$ Since $M_\fm$ is
equidimensional, $\h_M(\fm/\fp)= \h_M(\fm)$. As, by definition of
$t$, $\fp\not\in\Ass_A(M/N_t)$, it follows that
$0:_A(M/N_t)\not\subseteq \fp$. Localizing the above exact
sequences at $\fm$ implies the following exact sequences.
 $$\begin{array}{llllll}
&0\longrightarrow (A/\fp)_\fm\longrightarrow M_\fm\longrightarrow
(M/N_0)_\fm\longrightarrow 0,\\
&0\longrightarrow (A/\fp)_\fm\longrightarrow
(M/N_0)_\fm\longrightarrow
(M/N_1)_\fm\longrightarrow 0,\\
&\vdots \\
&0\longrightarrow (A/\fp)_\fm\longrightarrow
(M/N_{t-2})_\fm\longrightarrow
(M/N_{t-1})_\fm\longrightarrow 0,\\
&0\longrightarrow (A/\fp)_\fm\longrightarrow
(M/N_{t-1})_\fm\longrightarrow 0.\end{array}$$ Choose an element
$y\in0:_A (M/N_t)\setminus \fp$. By assumption, there is an
element $x\in A\setminus\underset{\fq\in\Min_A(M)}{\cup}\fq$ such
that $x\H_{\fm A_\fm}^i(M_\fm)= 0$ for all $i< \h_M(\fm)$. Now,
with a similar technique as in the proof of \cite[Lemma 3.1
(i)]{Z} one can deduce that $(xy)^l\H_\fm^i(A/\fp)_\fm= 0$ for all
$i< \h_M(\fm)$ and for some integer $l> 0$.
\end{proof}
\begin{cor}
Let $M$ be a finite $A$--module of finite dimension such its
Cousin complex $\mathcal{C}_A(M)$ is finite. Then the ring $A/0:_A
M$ is universally catenary.
\end{cor}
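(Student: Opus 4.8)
The plan is to reduce the Corollary to Theorem 2.14 by checking its single hypothesis, namely that $M$ has a uniform local cohomological annihilator. But this is exactly the content of Theorem 2.7: since $M$ is a finite $A$--module of finite dimension whose Cousin complex $\mathcal{C}_A(M)$ is finite, Theorem 2.7 guarantees that $M$ possesses a uniform local cohomological annihilator. Once this is in hand, Theorem 2.14 applies verbatim and yields that $A/(0:_A M)$ is universally catenary.

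Thus the entire argument is: first invoke Theorem 2.7 to produce $x\in A\setminus\bigcup_{\fp\in\Min_A(M)}\fp$ with $x\H_\fm^i(M)=0$ for all $i<\dim_{A_\fm}(M_\fm)$ and all maximal ideals $\fm$; then feed this hypothesis into Theorem 2.14, whose conclusion is precisely that $A/(0:_A M)$ has a uniform local cohomological annihilator and hence (by \cite[Theorem 3.2]{Z}, as already used inside the proof of Theorem 2.14) is universally catenary. There is essentially nothing to prove beyond chaining these two results together.

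Since the real work has been done in the preceding two theorems, I do not anticipate any genuine obstacle here; the only thing to be careful about is matching hypotheses. Theorem 2.7 requires $\dim_A(M)$ finite and $\mathcal{C}_A(M)$ finite — both assumed — so its application is immediate, and Theorem 2.14 requires only that $M$ have a uniform local cohomological annihilator, with no ambient assumptions on $A$ being local or satisfying $(S_2)$. Hence the proof is a one--line deduction.

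\begin{proof}
Since $M$ is a finite $A$--module of finite dimension with finite Cousin complex, Theorem 2.7 shows that $M$ has a uniform local cohomological annihilator. Now Theorem 2.14 implies that $A/(0:_A M)$ is universally catenary.
\end{proof}
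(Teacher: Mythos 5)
Your proof is correct and is exactly the paper's own argument: apply Theorem 2.7 to obtain a uniform local cohomological annihilator for $M$, then conclude via Theorem 2.14. Nothing further is needed.
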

\begin{proof}
By Theorem 2.7, $M$ has a uniform local cohomological annihilator.
Now, the result follows by Theorem 2.14.
\end{proof}

\section{Height of an ideal}
 As mentioned in Corollary 2.3 and in the proof
of Theorem 2.7, we may write the following corollary.
\begin{cor}
For any finite $A$--module $M$ and any ideal $\fa$ of $A$ with $\fa
M\not=M$,
 $$\underset{-1\leq i}{\prod}(0 :_A
H^i)\subseteq 0 :_A \H_\fa^{\h_M(\fa)- 1}(M).$$
\end{cor}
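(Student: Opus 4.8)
The plan is to reduce the claim about a single local cohomology module $\H_\fa^{\h_M(\fa)-1}(M)$ to the finitely-many Ext-module statements already proved in Corollary~2.3. First I would set $r:=\h_M(\fa)$ and dispose of the trivial case $r=0$ (then $\H_\fa^{-1}(M)=0$ and there is nothing to prove), so assume $r\geq 1$ and put $s:=r-1$, so $0\leq s<\h_M(\fa)$. The key point is the standard identification of local cohomology as a direct limit of Ext modules: $\H_\fa^{s}(M)\cong \varinjlim_{n}\Ext_A^{s}(A/\fa^{n},M)$. Since $M$ is finite and $\fa^{n}M\neq M$ for every $n$ (because $\fa M\neq M$), and since $\h_M(\fa^{n})=\h_M(\fa)$ for all $n$ (both equal $\h(\tfrac{\fa+I}{I})$ with $I=\Ann_A M$, as noted just before Lemma~2.1), Corollary~2.3 applies with $\fa$ replaced by $\fa^{n}$: for each $n$,
$$\prod_{i=-1}^{s-1}(0:_A H^i)\ \subseteq\ \bigcap_{i=0}^{s}(0:_A\Ext_A^{i}(A/\fa^{n},M))\ \subseteq\ 0:_A\Ext_A^{s}(A/\fa^{n},M).$$

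Next I would observe that the displayed ideal $J:=\prod_{-1\leq i}(0:_A H^i)$ (the product over all $i$, which differs from $\prod_{i=-1}^{s-1}$ only by the extra factors for $i\geq s$, all of which shrink the product) is contained in each $0:_A\Ext_A^{s}(A/\fa^{n},M)$, uniformly in $n$. Passing to the direct limit over $n$, any element annihilating every term of the directed system annihilates the colimit; hence $J\cdot\H_\fa^{s}(M)=J\cdot\varinjlim_n\Ext_A^{s}(A/\fa^{n},M)=0$, which is exactly $J\subseteq 0:_A\H_\fa^{\h_M(\fa)-1}(M)$. Note the product over \emph{all} $i\geq -1$ is a genuine (finite, if $\mathcal{C}_A(M)$ is finite — but finiteness is not even needed here since we only use one containment direction) product of ideals, and making it larger in the index range only makes it smaller as an ideal, so the stated inclusion follows a fortiori from the Ext version.

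The main obstacle is essentially bookkeeping rather than a real difficulty: one must be careful that Corollary~2.3 is stated for a fixed ideal, so it has to be invoked once for each $\fa^{n}$, and one must check that the hypotheses ($M$ finite, $\fa^{n}M\neq M$, $r-1<\h_M(\fa^{n})$) persist after replacing $\fa$ by its powers — the height stability $\h_M(\fa^n)=\h_M(\fa)$ is the only slightly delicate point, and it is immediate from the description $\h_M(\fb)=\h((\fb+I)/I)$ since $\fa^n$ and $\fa$ have the same radical. Everything else is the routine exchange of a direct limit with annihilator containments.
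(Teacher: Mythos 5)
Your proposal is correct and matches the paper's intended argument: the paper also derives this by applying Corollary 2.3 to the powers $\fa^n$ (noting $\h_M(\fa^n)=\h_M(\fa)$) and passing to the direct limit $\H_\fa^{s}(M)\cong\varinjlim_n\Ext_A^{s}(A/\fa^n,M)$, exactly as in the proof of Theorem 2.7. Your extra remarks (the trivial case $\h_M(\fa)=0$ and the a fortiori step enlarging the index range of the product) are fine and consistent with the paper.
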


 We now raise the question that whether it is
possible to improve the upper bound restriction. \\
 \noindent {\bf
Question.} Does the inequality
$$\underset{-1\leq i}{\prod}(0 :_A
H^i)\subseteq 0 :_A \H_\fa^{\h_M(\fa)}(M)$$ hold?

It will be proved that the answer is negative for the class of
finite $A$--modules $M$ with finite Cousin cohomologies. More
precisely,
\begin{thm}
Assume that $M$ is a finite $A$--module of finite dimension and that
its Cousin complex $\mathcal{C}_A(M)$ is finite. Then
$$\h_M(\fa)= \inf\{r: \underset{-1\leq i}{\prod}(0 :_A
H^i)\not\subseteq 0 :_A \H_\fa^r(M)\},$$ for all ideals $\fa$ with
$\fa M\not= M$.
\end{thm}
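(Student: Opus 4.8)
The plan is to prove the two inequalities
$$\h_M(\fa)\leq \inf\{r: \textstyle\prod_{-1\leq i}(0:_A H^i)\not\subseteq 0:_A \H_\fa^r(M)\}$$
and the reverse separately. For the inequality ``$\leq$'' it suffices to show that $\prod_{-1\leq i}(0:_A H^i)\subseteq 0:_A \H_\fa^r(M)$ for every $r<\h_M(\fa)$. This is essentially Corollary 3.1 strengthened to all $r$ below $\h_M(\fa)$, and I would get it the same way Corollary 2.3 was obtained: Lemma 2.1 gives $\Ext_A^s(A/\fa,M^k)=0$ for $k<\h_M(\fa)$ and all $s$, and then the argument of Proposition 2.2 together with passage to the direct limit over powers of $\fa$ (exactly as in the proof of Theorem 2.7, replacing $\fm^j$ by $\fa^j$) yields $\prod_{i=-1}^{r-1}(0:_A H^i)\subseteq 0:_A \H_\fa^r(M)$, which is more than enough.

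The real content is the inequality ``$\geq$'': I must exhibit, for $r=\h_M(\fa)$, an element of $\prod_{-1\leq i}(0:_A H^i)$ that does \emph{not} annihilate $\H_\fa^r(M)$. By Lemma 2.5 the ideal $\fc:=\cap_{i\geq -1}(0:_A H^i)=\prod_{-1\leq i}(0:_A H^i)$ (up to radical, and certainly for the avoidance statement) is not contained in $\cup_{\fp\in\Min_A(M)}\fp$. The strategy is to choose a minimal prime $\fp$ of $\Supp_A(M)\cap V(\fa)$ with $\dim M_\fp=\h_M(\fa)=:r$, localize at $\fp$, and use that $\H_\fa^r(M)_\fp\cong\H_{\fa A_\fp}^r(M_\fp)$. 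Since $\dim M_\fp=r$ and $\fp$ is minimal in $\Supp(M)\cap V(\fa)$, the ideal $\fa A_\fp$ is $\fp A_\fp$-primary on $\Supp(M_\fp)$, so $\H_{\fa A_\fp}^r(M_\fp)=\H_{\fp A_\fp}^r(M_\fp)$ is the top local cohomology of the $r$-dimensional module $M_\fp$, hence nonzero by Grothendieck non-vanishing. By Theorem 2.12, $M$ is locally equidimensional, so $M_\fp$ is equidimensional of dimension $r$; its top local cohomology is then nonzero after killing lower-dimensional pieces. One then has to verify that a suitable representative $c\in\fc$ maps to a nonzerodivisor on (a relevant subquotient controlling) $\H_{\fp A_\fp}^r(M_\fp)$; this is where Lemma 2.5 is used decisively, since $c\notin\fp$ gives that multiplication by $c$ on $\H_{\fp A_\fp}^r(M_\fp)$ is surjective (even an isomorphism after completion, by Matlis duality on the artinian module $\H_{\fp A_\fp}^r(\widehat{M_\fp})$, whose Matlis dual has dimension $r$ and support meeting $V(\fa)$), whence $c\,\H_\fa^r(M)\neq 0$.

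I expect the main obstacle to be this last nonvanishing step: one needs $c$ not merely outside $\fp$ but genuinely outside the support of $\H_{\fp A_\fp}^r(M_\fp)$ as an $A_\fp$-module, equivalently (by Matlis duality over $\widehat{A_\fp}$) outside $\Att$ of that module. The clean way around it is to combine $(S_2)$ with the Cohen--Macaulay-formal-fibre hypothesis implicit in the running setup: equidimensionality of $\widehat{M_\fp}$ (Theorem 2.13) forces $\Att_{\widehat{A_\fp}}\H_{\fp A_\fp}^r(\widehat{M_\fp})=\Assh_{\widehat{A_\fp}}(\widehat{M_\fp})$, all of whose members contract to $\fp$-primary-free primes, so $c\notin\fp$ indeed does the job. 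Assembling: pick $c\in\fc\setminus\cup_{\fp\in\Min_A(M)}\fp$ by Lemma 2.5, pick $\fp$ realizing $\h_M(\fa)$, and the two displays above sandwich $\h_M(\fa)$, finishing the proof.
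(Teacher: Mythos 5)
Your ``$\leq$'' direction is exactly the paper's argument (Corollary 2.3 plus the direct limit over powers of $\fa$) and is fine. The gap is in the ``$\geq$'' direction, at the point where you write ``since $c\notin\fp$ \dots''. Lemma 2.5 only produces an element $c\in\fc=\prod_{i\geq -1}(0:_A H^i)$ avoiding the \emph{minimal} primes of $M$, whereas the prime $\fp$ at which you localize is a minimal prime of $\Supp_A(M)\cap \V(\fa)$ with $\dim M_\fp=\h_M(\fa)$; when $\h_M(\fa)>0$ this $\fp$ is not a minimal prime of $M$, and nothing guarantees $\fc\not\subseteq\fp$. In fact this can genuinely fail: take any $\fp\in\Supp_A(H^i)$ for some $i\geq 0$ (possible whenever some $H^i\neq 0$) and put $\fa=\fp$; then $\fc\subseteq 0:_AH^i\subseteq\fp$, so \emph{every} $c\in\fc$ lies in $\fp$ and your localization step has no element to work with, even though the theorem is still true for this $\fa$. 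Your proposed rescue does not address this: invoking $(S_2)$ and Cohen--Macaulay formal fibres imports hypotheses that Theorem 3.2 does not have (it assumes only finiteness of $M$, of $\dim M$, and of $\mathcal{C}_A(M)$), and the Matlis-duality/attached-prime remarks concern the structure of $\H^r_{\fp A_\fp}(M_\fp)$, not the real problem, which is the \emph{existence} of an element of $\fc$ outside $\fp$. (If $c\notin\fp$ were available, your conclusion would be immediate anyway, since $c$ is then a unit in $A_\fp$ and $\H^r_{\fp A_\fp}(M_\fp)\neq 0$ by Grothendieck non-vanishing.)

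The paper gets around this by reversing the quantifiers: instead of producing one good element of $\fc$ outside the localizing prime, it shows that the whole annihilator $0:_A\H^h_\fa(M)$, $h=\h_M(\fa)$, is contained in $\bigcup_{\fp\in\Min_A(M)}\fp$. Concretely (after reducing to $0:_AM=0$ via the Independence Theorem), if $x\H^h_\fa(M)=0$, localize at a prime $\fq$ minimal over $\fa$ with $\dim M_\fq=h$; then $x/1$ annihilates the nonzero top local cohomology $\H^h_{\fq A_\fq}(M_\fq)$, whose attached primes are $\Assh_{A_\fq}(M_\fq)$ by \cite[Theorem 7.3.2]{BS}, and the annihilator of a nonzero Artinian module lies in each attached prime \cite[Proposition 7.2.11]{BS}; since members of $\Assh_{A_\fq}(M_\fq)$ are of the form $\fp A_\fq$ with $\fp\in\Min_A(M)$, one gets $x\in\fp$. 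Then Lemma 2.5 says $\fc$ is \emph{not} contained in $\bigcup_{\fp\in\Min_A(M)}\fp$, hence not in $0:_A\H^h_\fa(M)$. You need this (or an equivalent attached-primes argument at $\fq$) to close the gap; as written, your proof does not establish the ``$\geq$'' inequality.
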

\begin{proof}
By Corollary 2.3, $\underset{i\geq -1}{\prod}(0:_A H^i) \subseteq
0:_A\Ext_A^r(A/\fa^n, M)$ for all $r, 0\leq r< \h_M(\fa)$ and all
$n\geq 0$. Passing to the direct limit, as in the proof of Theorem
2.7, one has  $\underset{i\geq -1}{\prod}(0:_A H^i) \subseteq
0:_A\H_\fa^r(M)$ for all $r< \h_M(\fa)$. Hence we have
$$\h_M(\fa)\leq \inf\{r: \underset{-1\leq i}{\prod}(0 :_A
H^i)\not\subseteq 0 :_A \H_\fa^r(M)\}.$$ Thus it is sufficient to
show that $\underset{-1\leq i}{\prod}(0 :_A H^i)\not\subseteq 0
:_A \H_\fa^{\h_M(\fa)}(M)$.  By Independence Theorem of local
cohomology (c.f. \cite[Theorem 4.2.1]{BS}),
$\H_\fa^{\h_M(\fa)}(M)= \H_\fb^{\h_M(\fb)}(M)$ as
$\overline{A}=A/(0:_A M)$--module, where $\fb= \fa+ 0:_A M/0:_A
M$. Note that $\h_M(\fa)= \h_M(\fb)$ and that
$\mathcal{C}_A(M)\cong\mathcal{C}_{ \overline{A}}(M)$ (see
\cite[Lemma 1.2]{D}).

Hence we may assume that $0:_A M= 0$. Set $h:= \h_M(\fa)$. Let
$x\in 0:_A \H_\fa^h(M)$. As $\fa M\not= M$, there exists a minimal
prime $\fq$ over $\fa$ in $\Supp_A(M)$ such that $\dim (A_\fq)=
\h_M(\fa)$. Hence $x/1\in 0:_{A_\fq} \H_{\fq A_\fq}^h(M_\fq)$.
Thus, by any choice of $\fp A_\fq\in\Assh_{A_\fq}(M_\fq)$ we have
$x/1\in\fp A_\fq$ (see \cite[Proposition 7.2.11(ii) and Theorem
7.3.2]{BS}) and so $x\in\fp$ . Therefore, one has $0:_A
\H_\fa^h(M)\subseteq \underset{\fp\in\Min_A(M)}{\cup} \fp$. On the
other hand, by Lemma 2.5, $\prod_{i\geq -1}(0 :_A
H^i)\not\subseteq \cup_{\fp\in\Min_A(M)}\fp$, from which it
follows that
$$\prod_{i\geq -1}(0 :_A H^i)\not\subseteq 0:_A\H_\fa^h(M).$$
\end{proof}

{\bf Acknowledgment.} The authors are grateful to the referee for
helpful comments.

\end{document}